\theoremstyle{plain}
\newtheorem{theorem}{Theorem}[section]
\newtheorem{prop}[theorem]{Proposition}
\theoremstyle{definition}
\newtheorem{definition}{Definition}[section]
\newtheorem{remark}{\textnormal{\textbf{Remark}}}
\theoremstyle{remark}
\newtheorem{example}{Example}
\numberwithin{equation}{section}
\begin{document}

\title[1. The involutiveness and standard basis]%
{On the internal approach to differential equations\\ 1. The involutiveness and standard basis}
\author[Veronika Chrastinov\'a \and V\'aclav Tryhuk]%
{Veronika Chrastinov\'a \and V\'aclav Tryhuk}

\newcommand{\acr}{\newline\indent}

\address{Brno University of Technology\acr
Faculty of Civil Engineering\acr
Department of Mathematics\acr
Veve\v{r}\'{\i} 331/95, 602 00 Brno\acr
Czech Republic}

\email{chrastinova.v@fce.vutbr.cz, tryhuk.v@fce.vutbr.cz}

\thanks{This paper was elaborated with the financial support of the European
Union's "Operational Programme Research and Development for
Innovations", No. CZ.1.05/2.1.00/03.0097, as an activity of the
regional Centre AdMaS "Advanced Materials, Structures and
Technologies".}

\subjclass[2010]{58A17, 58E99, 13E05}

\keywords{higher--order symmetries, diffiety, involutivity, standard basis}

\begin{abstract}
The article treats the geometrical theory of partial differential equations in the absolute sense, i.e., without any additional structures and especially without any preferred choice of independent and dependent variables. The equations are subject to arbitrary transformations of variables in the widest possible sense. In this preparatory Part 1, the involutivity and the related standard bases are investigated as a~technical tool within the framework of commutative algebra. The particular case of ordinary differential equations is briefly mentioned in order to demonstrate the strength of this approach in the study of the structure, symmetries and constrained variational integrals under the simplifying condition of one independent variable. In full generality, these topics will be investigated in subsequent Parts of this article.
\end{abstract}

\maketitle

\section{Preface}\label{sec1}
The \emph{internal} (equivalently: \emph{absolute}) \emph{theory} of differential equations indifferent to the actual choice of dependent and independent variables in the widest possible sense was latently initiated in Lie\rq{}s approach to the first--order partial differential equations and contact transformations. However, it was implicitly proclaimed in full generality only by E.~Cartan in the pseudogroup theory \cite{T1} and then explicitly in his later article devoted to the Monge problem \cite{T2}. Alas, this idea was found difficult and the lack of further results with convincing outcomes did not sufficiently stimulate the following developments  for a~long time. The subsequent investigations by Janet, Ritt and Kolchin, Goldschmidt and Steinberg, Manin, Kuperschmidt, Lychagin and Vinogradov, Pommaret, Olver, Gardner, Stormark, Kuranishi, Kamran, Anderson and Fels (to name just a~few) were governed by other conception: by the study of special classes of differential equations firmly localized in finite--order fibered jet spaces (the \emph{external theory}) together with rather sophistical tools of differential algebra and rigid order--preserving $G$--structures. Some elements of the absolute theory nevertheless occur in the last part of monograph \cite{T3}, the free choice of dependent variables appears under the name of \emph{differential substitutions} in certain studies on integrable equations \cite{T4} and the \emph{generalized} (or: \emph{higher--order, Lie--B\"acklund}) \emph{infinitesimal symmetries} cannot be understood without the use of some modest internal concepts \cite{T5}.

For better clarity, let us recall the original E.~Cartan's idea. Two classical systems $S$ and $S\rq{}$ of differential equations are called \emph{absolutely equivalent} if there exist prolongations $\sum$ of $S$ and $\sum \rq{}$ of $S\rq{}$ defined on certain spaces $M, M\rq{}$ of equal finite dimensions such that appropriate invertible mapping $M\rightarrow M\rq{}$ identifies $\sum$ with $\sum\rq{}.$ It should be noted that E.~Cartan's prolongation is a~very broad concept defined as follows. System $\sum$ on a~space $M$ is a~prolongation of a~system $S$ defined on certain space $N$ if the variables of $M$ involve variables of $N$ (i.e., $N$ is a~factorspace of $M$) which moreover provides natural bijection between solutions of $\sum$ and $S$ (by using the projection of $M$ onto $N$). It follows that there are \emph{many} prolongations $\sum$ of $S$ in this sense. For instance, the system \[(\Delta u=\,)\ \frac{\partial^2u}{\partial x^2}+\frac{\partial^2u}{\partial y^2}=0,\ \frac{\partial u}{\partial x}=v,\ u\frac{\partial u}{\partial y}+\frac{\partial^3u}{\partial y^3}=w\] in the space $M$ of variables $x,y,u,v,w$ is a~prolongation of the first equation $\Delta u=0$ in the space $N$ of variables $x,y,u.$

We undertake this idea with the following adjustment. Let $\Omega$ and $\Omega'$ be infinite (better: the largest possible) prolongations of $S$ and $S'$ defined on spaces $\mathbf M$ and $\mathbf M',$ respectively. Since $\Omega$ and $\Omega'$ may be considered as a~prolongation of $\sum$ and $\sum'$ as well, we conclude that $S$ and $S'$ are absolutely equivalent in the above E.~Cartan's sense if and only if appropriate invertible mapping $\mathbf M\rightarrow\mathbf M'$ identifies $\Omega$ with $\Omega'.$ So the use of \emph{uncertain} prolongations $\sum, \sum'$ is deleted but the main advantage of the modified approach lies in the fact that the infinite prolongations $\Omega, \Omega'$ can be characterized without any use of coordinates \cite{T6}. As a~result, the "absolute nature" of this approach is automatically ensured. We speak of \emph{diffieties} $\Omega.$ It should be noted that a~given diffiety $\Omega$ may be regarded as the infinite prolongation of \emph{many} rather dissimilar systems $S$ of differential equations according to the additional choice of dependent and independent variables which is regarded as a~mere technical tool.

The particular case of \emph{ordinary} differential equations was already treated in previous article \cite{T7} and the main achievements are briefly recalled in Section~\ref{sec3} below for the convenience of the reader. The achievements rest on the method of \emph{standard bases} analogous to the common contact forms of the jet theory. Roughly saying, quite arbitrary \emph{system of ordinary differential equations} is "identified" with jet theory of \emph{all curves}, i.e., with the trivial (empty) system by using just a~standard basis. In the general case of \emph{partial} differential equations, this is more difficult task and we need involutiveness appropriately expressed in terms of commutative algebra as a~preparatory tool. This is the central topic of this Part 1 and we believe that the subsequent Parts will be more interesting.

In this article, we do not need any advanced technical tools. Only the most fundamental properties of differential forms and vector fields are enough. Certain novelty lies in the use of the infinite--dimensional underlying spaces $\mathbf M$ of diffieties $\Omega,$ however, all functions to appear are of the classical nature. They depend only on a~finite number of coordinates so that the usual rules of calculations are preserved. In accordance with the common practice, our reasonings are carried out in the local $C^\infty$--smooth category. For instance, our notational convention for a~map $\mathbf M\rightarrow\mathbf M$ allow the domain of definition to be a~proper open subset of~$\mathbf M.$
\section{Fundamental concepts}\label{sec2}
Let $\mathbf M$ be an~infinite--dimensional smooth manifold \emph{modelled on} $\mathbb R^\infty.$ 
In more detail, the space $\mathbf M$ is equipped with (local) coordinates $h^i:\mathbf M\rightarrow\mathbb R$ $(i=1,2,\ldots)$ together with the structural ring $\mathcal F=\mathcal F(\mathbf M)$ (the abbreviation occasionally omitting the letter $\mathbf M$) of real--valued smooth functions $f:\mathbf M\rightarrow\mathbb R$ where $f=f(h^1,\ldots,h^{m(f)})$ in terms of coordinates.
 We consider mappings $\mathbf m:\mathbf M\rightarrow\mathbf M$ given by certain formulae 
\[\mathbf m^*h^i=H^i\qquad (Hî^i=H^i(h^1,\ldots,h^{m(i)})\in\mathcal F; i=1,2,\ldots\,).\]
Analogous invertible formulae describe the admissible change of coordinates, see also \cite{T6, T7}.

Let $\Phi=\Phi(\mathbf M)$ be the $\mathcal F$--module of differential 1--forms
$$\varphi=\sum f^i\mbox{d}g^i\qquad (\mbox{finite sum; } f^i,g^i\in\mathcal F).$$
The familiar rules of exterior differential analysis can be applied without change. In particular, the form $\mathbf m^*\varphi=\sum\mathbf m^*f^i\mbox{d}\mathbf m^*g^i\in\Phi$ makes good sense.
Let $\mathcal T=\mathcal T(\mathbf M)$ be the $\mathcal F$--module of vector fields
$$Z=\sum z^i\dfrac{\partial}{\partial h^i}\qquad (\mbox{infinite sum; }z^i\in\mathcal F \text{ are arbitrary})$$
in terms of coordinates. In coordinate--free manner, the vector field $Z$ is interpreted as the $\mathcal F$--linear function on the $\mathcal F$--module $\Phi$ determined by the duality pairing
$$\mbox{d}h^i(Z)=Z\rfloor\mbox{d}h^i=Zh^i=z^i\in\mathcal F\qquad (i=1,2,\ldots\,).$$
With this principle in mind, let certain forms $\varphi^1,\varphi^2,\ldots\in\Phi$ provide a~\emph{basis of} $\Phi$ such that every $\varphi\in\Phi$ admits a~unique representation $\varphi=\sum f^i\varphi^i $ (finite sum, $f^i\in\mathcal F$). Then the values
\begin{equation}\label{eq2.1}\varphi^i(Z)=Z\rfloor\varphi^i=\bar z^i\in\mathcal F\quad (i=1,2,\ldots\,)\end{equation}
uniquely determine the vector field which we denote by
\begin{equation}\label{eq2.2}Z=\sum \bar z^i\dfrac{\partial}{\partial\varphi^i}\quad (\mbox{infinite sum}, \bar z^i\in\mathcal F) \ \end{equation} (abbreviations like $\partial/\partial f=\partial/\partial\mbox{d}f$ for the notation will appear).
In particular, the vector fields $\partial/\partial\varphi^1,\partial/\partial\varphi^2,\ldots\,\in\mathcal T$ provide the~\emph{weak basis} of module $\mathcal T$ \emph{dual} to the original basis $\varphi^1,\varphi^2,\ldots\,$ of $\Phi.$ 
We recall the \emph{Lie derivative} $\mathcal L_Z=Z\rfloor\mbox{d}+\mbox{d}Z\rfloor$ acting on differential forms and the \emph{Lie bracket} $\mathcal L_XY=[X,Y]=XY-YX$ $(X,Y\in\mathcal T)$ without comment.

We shall deal with various $\mathcal F$--submodules $\Omega\subset\Phi$ of differential forms together with their \emph{orthogonal submodules} $\Omega^\perp\subset\mathcal T.$  This module $\Omega^\perp$ orthogonal to $\Omega$ includes vector fields $Z\in\mathcal T$ satisfying $\omega(Z)=0$ for all $\omega\in\Omega.$ The (local) existence of bases in various submodules $\Omega\subset\Phi$ to appear is tacitly postulated.

\begin{definition}
\label{def2.1}\rm
A~finite--codimensional submodule $\Omega\subset\Phi(\mathbf M)$ is called a~\emph{diffiety} on $\mathbf M$ if there exists filtration
$\Omega_*:\Omega_0\subset\Omega_1\subset\cdots\subset\Omega=\cup\Omega_l $ by finite--dimensional submodules $\Omega_l\subset\Omega$ $(l=0,1,\ldots\,)$ such that
\begin{equation}\label{eq2.3}\mathcal L_{\mathcal H}\Omega_l\subset\Omega_{l+1}\quad (\mbox{all }l),\quad \Omega_l+\mathcal L_{\mathcal H}\Omega_l=\Omega_{l+1}\quad (l\mbox{ large enough}),\end{equation}
the so--called \emph{good filtration}.
We systematically denote $\mathcal H=\mathcal H(\Omega)=\Omega^\perp$ if $\Omega$ is a~diffiety from now on. For every submodule $\varTheta\subset\Phi(\mathbf M),$ the submodule $\mathcal L_{\mathcal H}\varTheta\subset\Phi(\mathbf M)$ is generated by all differential forms $\mathcal L_Z\vartheta$ $(Z\in\mathcal H, \vartheta\in\varTheta).$
\end{definition}

\begin{remark}\label{rem2.1}
In succinct terms, diffiety $\Omega$ is such a~\emph{finite--codimensional} submodule $\Omega\subset\Phi(\mathbf M)$ that we have
\begin{equation}\label{eq2.4}\Omega=\Gamma+\mathcal L_{\mathcal H}\Gamma+\mathcal L^2_{\mathcal H}\Gamma+\cdots
\end{equation} for an~appropriate \emph{finite--dimensional} submodule $\Gamma\subset\Phi(\mathbf M)$ satisfying $\mathcal H\subset\Gamma^\perp;$ choose $\Gamma=\Omega_l$ with $l$ large enough. We shall later see that the codimensionality assumption can be in a~certain sense omitted. There are many filtrations $\Omega_*$ of diffiety $\Omega$ with properties (\ref{eq2.3}). In particular, we notice the $c$--\emph{lift} (fixed $c=0,1,\ldots\,$) denoted $\Omega_{*+c}$ where the lower terms $\Omega_0,\ldots,\Omega_{c-1}$ of the original filtration $\Omega_*$ are neglected. We shall however need quite opposite arrangements of a~given good filtration $\Omega_*$ later on.
\end{remark}

\begin{definition}
\label{def2.2}\rm
Let $\Omega\subset\Phi(\mathbf M)$ be a~diffiety of codimension $n=n(\Omega)\geq 1.$  Functions $x_1,\ldots,x_n\in\mathcal F(\mathbf M)$ are called \emph{independent variables} for $\Omega$ if the differentials $\mbox{d}x_1,\ldots,\mbox{d}x_n$  are linearly independent modulo $\Omega.$ Alternatively saying, there exists unique representation
\[\varphi=\sum f_i\mbox{d}x_i+\omega\quad (f_i\in\mathcal F(\mathbf M), \omega\in\Omega)\]
for every $\varphi\in\Phi(\mathbf M).$ Vector fields $D_1,\ldots,D_n\in\mathcal T(\mathbf M)$  uniquely defined by
\[\varphi(D_i)=f_i, \omega(D_i)=0\quad (i=1,\ldots,n)\]
are called \emph{formal} (or: \emph{total}) \emph{derivatives} to the diffiety $\Omega.$ They provide a~useful but not always the best possible basis of module $\mathcal H(\Omega).$
\end{definition}

\medskip

At this place, we can clarify the interrelations between the classical differential equations and diffieties. In more detail they are as follows.

In one direction, a~given system of differential equations may be represented as a~Pfaffian system $\omega=0$ and the module $\Omega$ generated by all such 1--forms $\omega$  is just the diffiety $\Omega.$ More precisely, we must deal with the infinite prolongation of the Pfaffian system. Particular examples to follow later on will be easy in this respect and do not need any comments here.We also refer to quite instructive Section~\ref{sec3} below.

 In the reverse direction, let us consider some diffiety $\Omega\subset\Phi(\mathbf M).$ Due to the existence of filtration $\Omega_*$ with properties (\ref{eq2.3}), there exist forms $\gamma^j=\sum a^j_k\mbox{d}h^k\in\Omega$ $(j=1,\ldots,m)$ such that the forms $$\gamma^j,\gamma^j_i=\mathcal L_{D_i}\gamma^j,\gamma^j_{ii'}=\mathcal L_{D_i}\mathcal L_{D_{i'}}\gamma^j,\ldots\qquad (i,i',\ldots=1,\ldots,n;\, j=1,\ldots,m)$$
generate the module $\Omega,$ see Remark~\ref{rem2.1}. The corresponding Pfaffian system $\gamma^j=\gamma^j_i=\gamma^j_{ii'}=\cdots =0$ is obviously equivalent to the differential equations
$$\sum a^j_k\frac{\partial h^k}{\partial x_i}=0, \frac{\mbox{d}}{\mbox{d} x_{i'}}\sum a^j_k\frac{\partial h^k}{\partial x_i}=0,\frac{\mbox{d}^2}{\mbox{d} x_{i'}\mbox{d} x_{i''}}\sum a^j_k\frac{\partial h^k}{\partial x_i}=0,\cdots$$
for a~finite number of unknown functions $h^k=h^k(x_1,\ldots,x_n)$ occuring in the forms $\gamma^1,\ldots,\gamma^m.$
So we have the infinite prolongation in the classical sense.
\begin{definition}\label{def2.3}\rm
Let $\Omega\subset\Phi(\mathbf M)$ be a~diffiety. Invertible mapping $\mathbf m:\mathbf M\rightarrow\mathbf M$ is called \emph{automorphism} (or: \emph{symmetry}) of $\Omega$ if $\mathbf m^*\Omega\subset\Omega.$ Vector field $Z\in\mathcal T(\mathbf M)$ is called a~\emph{variation} of $\Omega$ if $\mathcal L_Z\Omega\subset\Omega.$ If $Z$ moreover (locally) generates a~one--parameter group of transformations $\mathbf m(t):\mathbf M\rightarrow\mathbf M$ $(-\varepsilon<t<\varepsilon,\, \varepsilon >0)$ then $Z$ is called the \emph{infinitesimal symmetry} of $\Omega.$
\end{definition}

We intentionally introduce rather narrow definition at this place in full accordance with the common practice, however, the invertibility of $\mathbf m$ is lacking in certain situations to appear later on and then we speak of a~\emph{morphism} $\mathbf m$. The term of a~"variation" is also unorthodox but well--founded by the calculus of variations, see \cite[Section~7]{T7}. Recall that only very special vector fields $Z\in\mathcal T(\mathbf M)$ generate a~true one--parameter group \cite{T6, T7, T8} and it follows that the use of the terms like "Lie--B\"acklund" or "generalized" infinitesimal symmetry is misleading to denote \emph{every} vector field $Z$ satisfying only the weak condition $\mathcal L_Z\Omega\subset\Omega.$

\begin{remark}\label{rem2.3}
Definitions \ref{def2.1}--\ref{def2.3} make a~good sense even if $\mathbf M$ is a~finite--dimensional space and then they concern the "completely integrable" Pfaffian system $\Omega$ where the Frobenius theorem can be applied and $\Omega$ admits a~basis consisting of total differentials. Even the strange subcase $n=n(\Omega)=0$ hence $\Omega=\Phi(\mathbf M), \mathcal H=0$ may be formally useful in certain respect.
\end{remark}

We separately mention the particular case $n=n(\Omega)=1$ of the underdetermined systems of ordinary differential equations where the general theory simplifies and some results \cite{T7} can be easily referred to. We believe that then the general case $n>1$ becomes more reliable for the reader.
\section{Deviation to one independent variable}\label{sec3}
Passing to the particular case $n=n(\Omega)=1$ of diffieties $\Omega\subset\Phi(\mathbf M),$ we abbreviate by $x=x_1$ the independent variable and $D=D_1$ the total derivative. Let us recall that $\mathcal L_D\Omega\subset\Omega$ and there exist $\gamma^1,\ldots,\gamma^m\in\Omega$ such that the family of all forms $\mathcal L_D^s\gamma^j$ $(j=1,\ldots,m;\,s=0,1,\ldots)$ together with differential $\mbox{d}x\notin\Omega$ generate the module $\Phi(\mathbf M)$ of all differential 1--forms on the underlying space $\mathbf M.$

Clearly $\varphi\in\Phi$ is lying in $\Omega$ if and only if $\varphi(D)=0$ therefore
\[\mbox{d}f-Df\cdot\mbox{d}x,\ Df\cdot\mbox{d}g-Dg\cdot\mbox{d}f\in\Omega\qquad (f,g\in\mathcal F).\]
So we have many forms lying in $\Omega$ and even the bases of $\Omega$ can be easily found in current examples, however, the most interesting result is as follows \cite{T6, T7}.
There exist forms $\tau^1,\ldots,\tau^K,\pi^1,\ldots,\pi^\mu\in\Omega$ $(K=K(\Omega),\,\mu=\mu(\Omega))$ such that
\begin{equation}\label{eq3.1}\tau^k,\, \pi^j_s=\mathcal L^s_D\pi^j\qquad (k=1,\ldots, K;\,j=1,\ldots,\mu;\,s=0,1,\ldots \,)\end{equation}
is a~basis of $\Omega$, the so--called \emph{standard basis}, where moreover
\begin{equation}\label{eq3.2}\mathcal L_D\tau^k\sim 0,\mbox{d}\tau^k\sim 0\quad (\text{mod }\tau^1,\ldots,\tau^K),\quad \mbox{d}\pi^j_s\sim \mbox{d}x\wedge\pi^j_{s+1}\quad (\text{mod }\Omega\wedge\Omega).\end{equation}
The standard bases of any diffiety $\Omega$ are not unique and can be determined using the tools of a~mere basic linear algebra. They are  useful in applications as follows.

\medskip

\emph{First}. Let $\mathcal R(\Omega)\subset\Omega$ be the~submodule generated by all differentials $\mbox{d}f$ $(f\in\mathcal F)$ that are lying in $\Omega.$ Then $\tau^1\ldots,\tau^ K$ is a~basis of $\mathcal R(\Omega)$ and there exists alternative basis $\mbox{d}t^1,\ldots,\mbox{d}t^K$ consisting of differentials, see (\ref{eq3.2}) and apply the Frobenius theorem. We may introduce space $\mathbf M^0$ (locally $\mathbf M^0=\mathbb R^{K+1})$ with coordinates $x,t^1,\ldots,t^K$ and diffiety $\Omega^0\subset\Phi(\mathbf M^0)$ with the basis $\text{d}t^1,\ldots,\text{d}t^K.$ The space $\mathbf M^0$ is a~factorspace of $\mathbf M$ and $\Omega^0$ may be regarded as a~diffiety "induced" by the primary diffiety $\Omega.$ We have  the "compositions series" $\Omega^0\subset\Omega^1=\Omega$ where $\Omega^0=0$ is trivial just in the \emph{controllable case} $K=0.$

\medskip

\emph{Second}. Let $\mathbf m: \mathbf M\rightarrow\mathbf M$ be a~symmetry of $\Omega.$ Clearly $\mathbf m^*\mathcal R(\Omega)\subset\mathcal R(\Omega)$ and thanks to the \emph{prolongation rule} \cite[Lemma 5.1]{T7}
\begin{equation}\label{eq3.3}\mathbf m^*x\cdot\mathbf m^*\mathcal L_D\omega=\mathcal L_D\mathbf m^*\omega\qquad (\omega\in\Omega), \end{equation}
already the forms $\mathbf m^*\pi^1,\ldots,\mathbf m^*\pi^\mu$ and the factor $\mathbf m^*x$ uniquely determine the remaining forms $\mathbf m^*\pi^j_s$ of the basis (\ref{eq3.1}) hence all forms $\mathbf m^*\omega$ $(\omega\in\Omega).$
At this algebraical level, the forms $\mathbf m^*\pi^k$ and the factor $\mathbf m^*x$ can be arbitrarily chosen to a~large extent: in order to ensure the invertibility of $\mathbf m,$ the equality $\mathbf m^*\mathcal R(\Omega)=\mathcal R(\Omega)$ and the conditions $\pi^j\in\mathbf m^*\Omega$ $(j=1,\ldots,\mu)$ are enough \cite[Lemma 5.3]{T7}.

\medskip

\emph{Third}. Let $Z\in\mathcal T(\mathbf M)$ be a~variation of $\Omega.$ Clearly $\mathcal L_Z\mathcal R(\Omega)\subset\mathcal R(\Omega)$  and thanks to the \emph{prolongation rule} \cite[Lemma 5.4]{T7}
\begin{equation} \label{eq3.4} (\mathcal L_D\omega)(Z)=D\omega(Z)\qquad (\omega\in\Omega), \end{equation}
we may choose arbitrary values
\[z=Zx, p^j=\pi^j(Z)\in\mathcal F\ \  (j=1,\ldots,\mu),\   z^k=z^k(t^1,\ldots,t^K)\ \ (k=1,\ldots,K)\]
and \emph{all variations} 
\begin{equation}\label{eq3.5}
Z=z\frac{\partial}{\partial x}+\sum z^k\frac{\partial}{\partial t^k}+\sum D^sp^j\frac{\partial}{\partial \pi^j_s}
 \end{equation}
\emph{are obtained in explicit terms.}
We recall on this occasion that $Z$ generates a~true one--parameter group of symmetries if and only if  all forms $\mathcal L_Z^s\pi^j$ $(j=1,\ldots,\mu;\,s=0,1,\ldots\,)$ are contained in a~finite--dimensional module, see \cite{T8} and \cite[Theorem 5.2]{ T7}. This achievement provides effective algorithm for the calculation of the higher--order infinitesimal symmetries of a~given diffiety.

\medskip

\emph{Fourth}. The \emph{Lagrange problem} of the calculus of variations appears if together with a~diffiety $\Omega\subset\Phi(\mathbf M)$ representing  the \emph{differential constraints}, also a~form $\varphi\in\Phi(\mathbf M)$ representing \emph{variational integral} $\int\varphi\ $ is given \cite{T6, T7}.
In this "absolute" variant of the calculus of variations, appropriate use of  the standard basis provides the Euler--Lagrange system and the Poincar\'e--Cartan form within the framework of the space $\mathbf M,$ without any use of the common Lagrange multipliers.

\medskip

All these wel--known achievements \cite{T7} will be adapted for the general case of partial differential equations in future.
We moreover believe that fruitful interrelations to the general theory of Lie--B\"acklund and Darboux transformations, non--local symmetries, Lie--Cartan pseudogroups and the theory of the variational bicomplex should be expected.

Let us conclude with simple examples.
\begin{example}\label{Ex1} (\emph{The contact diffiety}\/).
We introduce space $\mathbf M(m)$ that locally admits the familiar \emph{jet coordinates}
\begin{equation}\label{eq3.6}x,w^j_s\qquad (j=1,\ldots,m;\,s=0,1,\ldots\,)  \end{equation}
 and diffiety $\Omega(m)\subset\Phi(\mathbf M(m))$ with the basis consisting of classical \emph{contact forms}
\begin{equation}\label{eq3.7}
\omega^j_s=\mbox{d}w^j_s-w^j_{s+1}\mbox{d}x\qquad (j=1,\ldots,m;\,s=0,,\ldots\,) .
\end{equation}
This is (locally) the well--known infinite--order jet space of $x$--parametrized curves in $\mathbb R^{m+1},$ however, coordinates (\ref{eq3.6}) are regarded as a~mere technical tool here. \end{example}
Clearly
\begin{equation}\label{eq3.8}
D=\frac{\partial}{\partial x}+\sum w^j_{s+1}\frac{\partial}{\partial w^j_s}\in\mathcal H(\Omega(m)),\ \mathcal L_D\omega^j_s=\omega^j_{s+1}\ \end{equation}
$(j=1,\ldots,m;\,s=0,1,\ldots\,)$
so we have the standard basis (\ref{eq3.1}) where $K=K(\Omega)=0,$ $\mu=\mu(\Omega)=m$ and $ \pi^j_s=\omega^j_s$. Let us note on this occasion that the class of all these diffieties $\Omega(m)$ corresponding to classical "empty systems" of ordinary differential equations was not yet characterized  in coordinate--free terms if $m>1$, this is the ancient and rather difficult \emph{Monge problem} \cite{T6}.

There is the natural "order preserving" filtration $\Omega(m)_*$ where the forms (\ref{eq3.7}) with restriction $s\leq l$ generate the $l$--th order term $\Omega(m)_l.$ Symmetries $\mathbf m$ of the contact diffiety $\Omega(m)$ need not in general preserve this natural filtration. Three cases should be distinguished as follows.
\begin{center}\begin{picture}(115,100)(,-30)
\put(10,50){\line(1,0){10}}\put(20,50){\line(0,-1){55}}\put(5,55){$\Omega_0$}
\qbezier(20,40)(-10,25)(20,10)\put(18,11){\vector(2,-1){1}}
\put(40,50){\line(1,0){10}}\put(50,50){\line(0,-1){55}}\put(40,55){$\Omega_1$}
\qbezier(50,40)(20,25)(50,10)\put(48,11){\vector(2,-1){1}}
\put(70,50){\line(1,0){10}}\put(80,50){\line(0,-1){53}}\put(70,55){$\Omega_2$}
\qbezier(80,40)(50,25)(80,10)\put(78,11){\vector(2,-1){1}}
\put(92,55){$\ldots$}\put(5,-25){point symmetries}
\end{picture}
\begin{picture}(115,100)(0,-30)
\put(10,50){\line(1,0){10}}\put(20,50){\line(0,-1){55}}\put(5,55){$\Omega_0$}
\qbezier[50](10,10)(44,29)(78,48)\qbezier(60,38)(57,0)(30,21)\put(32,20){\vector(-1,1){1}}
\put(40,50){\line(1,0){10}}\put(50,50){\line(0,-1){55}}\put(40,55){$\Omega_1$}
\qbezier[50](30,-5)(66,15)(96,33)\qbezier(85,26)(82,-12)(55,9)\put(57,8){\vector(-1,1){1}}
\put(70,50){\line(1,0){10}}\put(80,50){\line(0,-1){53}}\put(70,55){$\Omega_2$}
\put(92,55){$\ldots$}
\put(-18,-25){general group of symmetries}
\put(30,-45){Figure 1.}
\end{picture}
\begin{picture}(100,100)(0,-30)
\put(10,50){\line(1,0){10}}\put(20,50){\line(0,-1){55}}\put(5,55){$\Omega_0$}
\qbezier[30](60,45)(83,35)(106,26)\qbezier(80,20)(90,20)(97,29)\put(96,28){\vector(1,1){1}}
\put(40,50){\line(1,0){10}}\put(50,50){\line(0,-1){55}}\put(40,55){$\Omega_1$}
\qbezier[60](10,45)(55,25)(100,5)\qbezier(50,10)(60,10)(67,19)\put(66,18){\vector(1,1){1}}
\put(70,50){\line(1,0){10}}\put(80,50){\line(0,-1){53}}\put(70,55){$\Omega_2$}
\qbezier[50](10,25)(40,10)(70,-5)\qbezier(20,0)(31,0)(38,10)\put(37,9){\vector(1,1){1}}
\put(92,55){$\ldots$}\put(95,55){$\ldots$}\put(10,-25){general symmetries}
\end{picture}
\end{center}

\vspace{0.7cm} 

The \emph{left--hand schema} describes the classical order--preserving symmetries. In more generality, if $\mathbf m$ is a~symmetry such that $\mathbf m^*\Omega_L(m)\subset\Omega_L(m)$ (fixed $L$) then $\mathbf m^*\Omega_l(m)=\Omega_l(m)$ for all $l$ and we have either a~\emph{point symmetry} (if $m>1$) or the \emph{Lie's contact transformation symmetry} (if $m=1$). This is the familiar \emph{Lie--B\"acklund theorem}. We may refer to \cite{T9} for a~short tricky proof. In actual literature, differential equations are as a~rule considered just in the finite--order jet spaces. The remaining higher--order symmetries cause many difficulties since the localization of the dotted lines in Figure~1 is not known in advance. Moreover there are two quite dissimilar possibilities. 
 The \emph{middle schema} describes such symmetries which may be included into a~Lie group. They preserve certain finite--dimensional submodules of $\Omega(m)$ and can be determined by the moving frame method.
The \emph{right--hand schema} describes the most general symmetries where both the Lie's method of the infinitesimal transformations and the original E. Cartan's general equivalence method fail. 

In terms of coordinates (\ref{eq3.6}), a~symmetry $\mathbf m:\mathbf M\rightarrow\mathbf M$ of diffiety $\Omega(m)$ is given by certain formulae
\begin{equation}\label{eq3.9}
\mathbf m^*x=F,\ \mathbf m^*w^j_s=F^j_s\qquad (F^j_{s+1}=\frac{DF^j_s}{DF};\, j=1,\ldots,m;\, s=0,1,\ldots\,)
\end{equation}
where $F,F^j_s\in\mathcal F(\mathbf M(m))$ and we suppose $DF\neq 0$ (hence $F\neq const.$). The recurrence is equivalent to the inclusions $\mathbf m^*\omega^j_s\in\Omega(m).$ We recall that the invertibility of $\mathbf m$ is ensured if $\Omega(m)_0\subset\mathbf m^*\Omega(m),$ see also \cite[Theorem~5.1 or Lemma~5.3]{T7}. We will not discuss the classical left--hand case here. Instead, we briefly mention the special "wave" method \cite{T11} in order to illustrate the middle and the right--hand cases of Figure~1. It is rather interesting that the inverse $\mathbf m^{-1}$ will be explicitly found by using the "reverse" wave, however, the original Huygens principle fails.
\begin{example}\label{Ex2}
Continuing with the above contact diffiety $\Omega(m),$ let us moreover introduce the "duplicate" $\bar\Omega(m)\subset\Phi(\bar{\mathbf M}(m)).$ Then the coordinates (\ref{eq3.6}) and the contact forms (\ref{eq3.7}) are completed with bars:
\[\bar x,\bar w^j_s, \bar\omega^j_s=\mbox{d}\bar w^j_s-\bar w^j_{s+1}\mbox{d}\bar x\qquad (j=1,\ldots,m;\, s=0,1,\ldots)\] and we have the total derivative
\[\bar D=\frac{\partial}{\partial\bar x}+\sum\bar w^j_{s+1}\frac{\partial}{\partial\bar w^j_s}\in\mathcal H(\bar\Omega(m)).\]
With this preparation, let moreover
\[W^j=W^j(x,w^1_0,\ldots,w^m_0,\bar x,\bar w^1_0,\ldots,\bar w^m_0)\qquad (j=1,\ldots,m)\]
be given smooth functions. Then our proposition is as follows.
\end{example}
\begin{prop}\label{ Proposition3.1}
Assume that the system \mbox{$W^1=\cdots =W^m=DW^1=0$} admits a~unique solution
\begin{equation}\label{eq3.10}
\bar x=F,\ \bar w^j_0=F^j_0\qquad(F,F^j_0\in\mathcal F(\mathbf M(m));\, j=1,\ldots,m)\end{equation}
such that $DF\neq 0$ by virtue of the classical implicit function theorem. Let analogously the system $W^1=\cdots =W^m=\bar DW^1=0$
admits a~unique solution
\begin{equation}\label{eq3.11}
x=\bar F,\ w^j_0=\bar F^j_0\qquad(\bar F,\bar F^j_0\in\mathcal F(\mathbf{\bar M}(m));\, j=1,\ldots,m)\end{equation}
such that $\bar D\bar F\neq 0.$ Then, if the remaining functions $F^j_s,\bar F^j_s$ $(s>0)$ are defined by the recurrence occuring in $(\ref{eq3.9})$, we obtain certain automorphism $(\ref{eq3.9})$ by using $(\ref{eq3.10})$ and moreover its inverse by using $(\ref{eq3.11}).$ That is, formal change of notation $\bar x=\mathbf m^*x, \bar w^j_s=\mathbf m^*w^j_s$ $(\ref{eq3.10})$ and $(\ref{eq3.11})$ provide explicit formulae for the symmetry $\mathbf m$ and its inverse $\mathbf m^{-1}.$
\end{prop}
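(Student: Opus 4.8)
The plan is to realise the map $\mathbf m$ of $(\ref{eq3.9})$--$(\ref{eq3.10})$ and the analogous map $\mathbf n$ built from $(\ref{eq3.11})$ as mutually inverse morphisms of the contact diffiety, so that $\mathbf m$ becomes an automorphism with $\mathbf m^{-1}=\mathbf n$. Here $\mathbf n\colon\bar{\mathbf M}(m)\to\mathbf M(m)$ is the map with $\mathbf n^*x=\bar F$, $\mathbf n^*w^j_0=\bar F^j_0$ and the reversed recurrence $\bar F^j_{s+1}=\bar D\bar F^j_s/\bar D\bar F$, while $\mathbf m\colon\mathbf M(m)\to\bar{\mathbf M}(m)$ has $\mathbf m^*\bar x=F$, $\mathbf m^*\bar w^j_s=F^j_s$, $F^j_{s+1}=DF^j_s/DF$ (this is $(\ref{eq3.9})$ read through the change of notation $\bar x=\mathbf m^*x$). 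First I would note that $\mathbf m$ really is a morphism: since $\mathbf m^*\bar\omega^j_s=\mathrm dF^j_s-F^j_{s+1}\,\mathrm dF$ satisfies $(\mathbf m^*\bar\omega^j_s)(D)=DF^j_s-F^j_{s+1}DF=0$ and a $1$--form lies in $\Omega(m)$ precisely when it annihilates $D$ (see Section~\ref{sec3}), we get $\mathbf m^*\bar\Omega(m)\subset\Omega(m)$; the hypothesis $DF\neq0$ is exactly what makes the recurrence legitimate. The same remark with $\bar D\bar F\neq0$ gives $\mathbf n^*\Omega(m)\subset\bar\Omega(m)$. The only real issue is thus the invertibility.

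The decisive step is a compatibility identity on the graph of $\mathbf m$. By $(\ref{eq3.10})$ the function $\Psi:=W^1(x,w^i_0,F,F^i_0)\in\mathcal F(\mathbf M(m))$ vanishes identically, hence $D\Psi=0$. Expanding by the chain rule and using $Dx=1$, $Dw^i_0=w^i_1$ and the contact recurrence $DF^i_0=F^i_1\,DF$, I would regroup this as
\[0=D\Psi=\Big(\frac{\partial W^1}{\partial x}+\sum_i w^i_1\frac{\partial W^1}{\partial w^i_0}\Big)+DF\cdot\Big(\frac{\partial W^1}{\partial\bar x}+\sum_i F^i_1\frac{\partial W^1}{\partial\bar w^i_0}\Big),\]
all partials taken at $(x,w^i_0,F,F^i_0)$. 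The first bracket is $DW^1$, which vanishes on the graph of $\mathbf m$ by the defining system $(\ref{eq3.10})$; the second bracket, after inserting the graph values $\bar w^i_1=F^i_1$, is exactly $\bar DW^1$. As $DF\neq0$, this forces $\bar DW^1=0$ on the graph of $\mathbf m$. Consequently the full system $W^j=0$, $\bar DW^1=0$ that defines $\mathbf n$ holds along the graph of $\mathbf m$; differentiating $W^1(\bar F,\bar F^i_0,\bar x,\bar w^i_0)\equiv0$ along $\bar D$ and using $\bar D\bar F\neq0$ gives, symmetrically, $DW^1=0$ on the graph of $\mathbf n$.

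To conclude, fix $P\in\mathbf M(m)$ and set $\bar P=\mathbf m(P)$. The zeroth--order coordinates $\big(x(P),w^i_0(P)\big)$ satisfy $W^j=0$ (by $(\ref{eq3.10})$) and $\bar DW^1=0$ (by the previous paragraph) with the parameters $\bar x=F(P)$, $\bar w^i_0=F^i_0(P)$, $\bar w^i_1=F^i_1(P)$; but $(\ref{eq3.11})$ asserts that this system has a~\emph{unique} solution, namely $\big(\bar F(\bar P),\bar F^i_0(\bar P)\big)$. Hence $\mathbf n\circ\mathbf m$ fixes $x$ and all $w^i_0$. Writing $\mathbf p=\mathbf n\circ\mathbf m$, which is a morphism of $\Omega(m)$ as a composite of morphisms, the contact recurrence reads $\mathbf p^*w^i_{s+1}=D(\mathbf p^*w^i_s)/D(\mathbf p^*x)$; since $\mathbf p^*x=x$ and $Dx=1$, induction starting from $\mathbf p^*w^i_0=w^i_0$ yields $\mathbf p^*w^i_s=w^i_s$ for every $s$, so $\mathbf n\circ\mathbf m=\mathrm{id}$. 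The symmetric computation gives $\mathbf m\circ\mathbf n=\mathrm{id}$, and therefore $\mathbf m$ is an automorphism of $\Omega(m)$ with inverse $\mathbf n$.

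The main obstacle is precisely the compatibility identity of the second paragraph: it is not evident a~priori that the single extra equation $DW^1=0$ chosen to pin down $\mathbf m$ entails the a~priori different equation $\bar DW^1=0$ used to build the inverse, and it is this reciprocity---obtained by differentiating $W^1=0$ once along $D$ and splitting off the nonzero factor $DF$---that makes the two implicit--function constructions inverse to one another. Beyond it, only bookkeeping requires attention: matching the $m+1$ equations $W^j=0$, $DW^1=0$ with the $m+1$ unknowns $\bar x,\bar w^i_0$, and keeping the copies $\mathbf M(m)$ and $\bar{\mathbf M}(m)$ apart until the final change of notation identifies them.
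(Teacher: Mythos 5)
Your proof is correct and takes essentially the same approach as the paper: your decisive compatibility identity is precisely the paper's computation $0=\mbox{d}W^1\cong\bar DW^1\cdot DF\,\mbox{d}x$ $(\text{mod }\Omega(m))$, merely obtained by evaluating on the vector field $D$ (chain rule along the graph) instead of being written as a congruence of differential forms. The only difference is that you spell out the endgame --- uniqueness in (\ref{eq3.11}) forcing $\mathbf n\circ\mathbf m$ to fix $x$ and the $w^j_0$, with the contact recurrence then propagating this to all $w^j_s$ --- which the paper compresses into ``we are done.''
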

\begin{proof} Let the equations $W^1=\cdots =W^m=DW^1=0$ be resolved by (\ref{eq3.10}). We may introduce additional functions $F^j_s\ (s>0)$ by recurrence. Then
\[0=\mbox{d}W^1=DW^1\mbox{d}x+\sum\frac{\partial W^1}{\partial w^j_0}\omega^j_0+\bar DW^1\mbox{d}\bar x+\sum\frac{\partial W^1}{\partial {\bar w}^j_0}\bar \omega^j_0\]
identically holds true where moreover $DW^1=0.$ The recurrence implies
\[\bar\omega^j_0=\mbox{d}F^j_0-F^j_1\mbox{d}F\cong DF^j_0\mbox{d}x-F^j_1DF\mbox{d}x\sim 0\quad (\mbox{mod }\Omega(m))\]
and moreover trivially
\[\bar DW^1\mbox{d}\bar x=\bar DW^1\mbox{d}F\cong \bar DW^1\cdot DF\mbox{d}x\quad (\text{mod }\Omega(m)).\]
Altogether $0=\mbox{d}W^1\cong\bar DW^1\cdot DF \mbox{d}x$ $(\text{mod }\Omega(m))$ where $DF\neq 0$ and it follows that $\bar DW^1=0.$
We conclude that equations (\ref{eq3.10}) imply (\ref{eq3.11}).
Analogously equations $W^1=\cdots =W^m=\bar DW^1=0$ imply $DW^1=0$ and we are done.
\end{proof}
One can observe that the classical Lie's contact transformations appear if $m=1.$ In more detail, we have only function $W^1=W(x,w^1_0,\bar x,\bar w^1_0)$ and three equations
\[W=0,\ (DW=)\ \frac{\partial W}{\partial x}+w^1_1\frac{\partial W}{\partial w^1_0}=0,\ (\bar DW=)\ \frac{\partial W}{\partial \bar x}+\bar w^1_1\frac{\partial W}{\partial \bar w^1_0}=0\]
determining $\mathbf m$ and $\mathbf m^{-1}.$ Then the obvious identity
\[0=\mbox{d}W=\frac{\partial W}{\partial w^1_0}\omega^1_0+\frac{\partial W}{\partial \bar w^1_0}\bar\omega^1_0\]
provides link to the primary Lie's approach (equation $\mbox{d}w^1_0-w^1_1\mbox{d}x=0$ is preserved) and moreover clearly
\[\bar w^1_1=\frac{\partial W}{\partial\bar x}/\frac{\partial W}{\partial\bar w^1_0}=F^1_1(x,w^1_0,w^1_1)\]
by virtue of the transformation formulae which means that the space of variables $x,w^1_0,w^1_1$ is preserved (the left--hand Figure~1).

Assuming $m>1,$ we mention only the particular choice
\[W^1=x\bar x-w^1_0+\bar w^1_0,\  W^k=\lambda w^k_0-\bar w^k_0\qquad (k=2,\ldots,m;\, 0\neq\lambda\in\mathbb R)\]
which provides the transformation formulae
\[\bar x=w^1_1,\ \bar w^1_0=xw^1_1-w^1_0,\ \bar w^k_0=\lambda w^k_0\qquad (k=2,\ldots,m).\]
This looks like the Lie's contact transformation combined with a~similarity, however, we have the \emph{order--increasing} transformation not of the classical kind. The space of variables $x,w^1_0,w^1_1,w^2_0,\ldots,w^m_0$ is preserved (the middle Figure~1).
\begin{example}\label{Ex3}
Still continuing with $\Omega(m),$ let $W=W(x,w^1_0,\ldots,w^m_0,\bar x,\bar w^1_0,\ldots,\bar w^m_0)$ be given function. Our proposition is as follows.\end{example}
\begin{prop}\label{ Proposition 2}
 Assume that the system \mbox{$W=DW=\cdots=D^mW=0$} admits a~unique solution $(\ref{eq3.10})$ such that  $DF\neq 0$ and the system $W=\bar DW=\cdots=\bar D^mW=0$ admits a~unique solution $(\ref{eq3.11})$ such that  $\bar DF\neq 0.$ Then the conclusion is the same as above.
\end{prop}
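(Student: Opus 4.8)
The plan is to show that the formulae~(\ref{eq3.10}) together with the recurrence in~(\ref{eq3.9}) define a genuine symmetry $\mathbf m$ of $\Omega(m)$, and then to prove that the graph of $\mathbf m$ satisfies the \emph{second} system $W=\bar DW=\cdots=\bar D^mW=0$; by the postulated uniqueness this forces the inverse of $\mathbf m$ to be precisely the map furnished by~(\ref{eq3.11}). First I would check that $\mathbf m$ is a morphism exactly as in the preceding proposition. Since a one--form lies in $\Omega(m)$ iff it annihilates $D$, the recurrence $F^j_{s+1}=DF^j_s/DF$ gives
\[(\mathbf m^*\bar\omega^j_s)(D)=DF^j_s-F^j_{s+1}DF=0,\]
so $\mathbf m^*\bar\omega^j_s\in\Omega(m)$ for all $j,s$; this is the multivariable analogue of the single computation carried out before and uses only $DF\neq0$.

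The technical core is a prolongation identity extending the form computation $0=\mbox{d}W^1$ of the preceding proof to all orders. For a function $g$ on the product space one reduces the expansion of $\mbox{d}g$ modulo $\Omega(m)$ after the substitution~(\ref{eq3.10}) and its prolongation: every unbarred contact form stays in $\Omega(m)$, each $\mathbf m^*\bar\omega^j_s\in\Omega(m)$ by the first step, and $\mathbf m^*\mbox{d}\bar x=\mbox{d}F\cong DF\,\mbox{d}x$. Comparing the coefficients of $\mbox{d}x$ in $\mbox{d}(\mathbf m^*g)\cong D(\mathbf m^*g)\,\mbox{d}x$ yields, for every $g$ of arbitrary order,
\[D(\mathbf m^*g)=\mathbf m^*(Dg)+DF\cdot\mathbf m^*(\bar Dg).\]
Applied to $g=W$ with $\mathbf m^*W=\mathbf m^*(DW)=0$ this already gives $\mathbf m^*(\bar DW)=0$, reproducing the level reached in the preceding proposition.

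The decisive step is then an induction. Using this identity on $g=D^a\bar D^{b-1}W$, the commutativity $[D,\bar D]=0$, and the resolved conditions $\mathbf m^*(D^kW)=0$ $(k=0,\ldots,m)$, I would prove
\[\mathbf m^*(D^a\bar D^bW)=0\qquad(a+b\leq m),\]
each passage from $b-1$ to $b$ dividing by $DF\neq0$. Taking $a=0$ gives $\mathbf m^*(\bar D^lW)=0$ for $l=0,\ldots,m$, i.e.\ the graph of $\mathbf m$ solves the second system. By the postulated uniqueness of its solution~(\ref{eq3.11}), the map $\bar{\mathbf m}$ built from~(\ref{eq3.11}) and the reverse recurrence satisfies $\bar{\mathbf m}\circ\mathbf m=\mathrm{id}$ on the order--zero coordinates, hence (being a contact map determined by its order--zero part) everywhere; interchanging the roles of the two copies and of $D,\bar D$ gives $\mathbf m\circ\bar{\mathbf m}=\mathrm{id}$ as well, so $\mathbf m$ is an automorphism with $\mathbf m^{-1}=\bar{\mathbf m}$, which is the asserted conclusion. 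The main obstacle is this induction: one must verify that the $m+1$ resolving equations $D^0W,\ldots,D^mW$ are exactly enough to propagate vanishing through all mixed derivatives $D^a\bar D^bW$ with $a+b\leq m$, and that the admissibility of the reverse recurrence rests on $\bar D\bar F\neq0$ (rather than on the misprinted $\bar DF$).
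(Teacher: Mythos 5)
Your proposal is correct and is essentially the paper's own argument: your pullback identity $D(\mathbf m^*g)=\mathbf m^*(Dg)+DF\cdot\mathbf m^*(\bar Dg)$ is just the coordinate form of the paper's ``look at the identity $0=\mathrm{d}g$ with $Dg=0$ modulo $\Omega(m)$'' step, and your induction over the triangle $a+b\leq m$ via $[D,\bar D]=0$ reproduces, for general $m$, exactly the three-step chain ($W,DW\Rightarrow\bar DW$; then $DW,D^2W\Rightarrow D\bar DW$; then $\bar DW,D\bar DW\Rightarrow\bar D^2W$) that the paper carries out only in the particular case $m=2$. You are also right that the hypothesis $\bar DF\neq 0$ in the statement is a misprint for $\bar D\bar F\neq 0$.
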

\begin{proof}  We mention only the particular case $m=2$ here. The system $W=DW=0$ and the identity $\mbox{d}W=0$ imply $\bar DW=0$ by the same reasons as above (look at the identity $0=\text{d}W$ with $DW=0$). Then $DW=D^2W=0$ and $\mbox{d}DW=0$ imply $\bar DDW=D\bar DW=0.$ Finally $\bar DW=D\bar  DW=0$ and $\mbox{d}\bar DW=0$ imply $\bar D^2W=0.$ Altogether we see that equations $W=DW=D^2W=0$ imply the equations $W=\bar DW=\bar D^2W=0.$ The converse is obvious which concludes the proof.
\end{proof}
The particular choice
\[W=x\bar x-w^1_0\bar w^1_0-\cdots -w^m_0\bar w^m_0\qquad (m>1)\]
provides a~very simple order--increasing symmetry $\mathbf m$ (where $\mathbf m=\mathbf m^{-1}$) not written here which does not preserve any finite--dimensional module (the right--hand Figure~1).

\bigskip

In the concluding examples, we turn to nontrivial differential equations together with variations and infinitesimal symmetries.
\begin{example}\label{Ex4} (\emph{A~resolved problem.}\/)
Let us deal with variations and infinitesimal symmetries $Z$ of a~differential equation $du/dx=F(dv/dx).$ In the common \emph{external theory}, the equation is identified with the subspace $\mathbf M\subset \mathbf M(2)$ defined by the conditions
\[w^1_1=F, w^1_2=DF=w^2_2F',w^1_3=D^2F=w^2_3F'+(w^2_2)^2F^{\prime\prime},\ldots\qquad (F=F(w^2_1)).\]
We are however interested in internal theory. Then the reasonings are restricted to the subspace $\mathbf M$ and the ambient jet space $\mathbf M(2)$ is neglected.
In more detail, we introduce coordinates
\[x,\ w^1_0,\ w^2_s\qquad (s=0,1,\ldots\,)\]
on $\mathbf M$ and diffiety $\Omega\subset\Phi(\mathbf M)$ with the natural basis
\[\omega^1_0=\mbox{d}w^1_0-F\mbox{d}x,\ \omega^2_s=\mbox{d}w^2_s-w^2_{s+1}\mbox{d}x\qquad (F=F(w^2_1)).\]
The total derivative\[D=\frac{\partial}{\partial x}+F\frac{\partial}{\partial w^1_0}+\sum w^2_{s+1}\frac{\partial}{\partial w^2_s}\qquad (F=F(w^2_1))\]
is induced on $\mathbf M$ by the original operator (\ref{eq3.8}).\end{example}
Dealing with this diffiety $\Omega,$ it follows easily that
\[\mathcal L_D\omega^1_0=F'\omega^2_1,\ \mathcal L_D\omega^2_0=\omega^2_1,\ \mathcal L_D(\omega^1_0-F'\omega^2_0)=-DF'\omega^2_0.\]
Assuming $DF'\neq 0$ for now, we have the standard basis
\[\pi=\pi^1_0=\omega^1_0-F'\omega^2_0,\ \pi_1=\mathcal L_D\pi=-DF'\omega^2_0,\ \pi_2=\mathcal L^2_D\pi=-DF'\omega^2_1-D^2F'\omega^2_0,\ \ldots\]
with $K=K(\Omega)=0$ and $\mu=\mu(\Omega)=1.$ Then the formula (\ref{eq3.5}) provides all variations
\begin{equation}\label{eq3.12}
Z=z\frac{\partial}{\partial x}+\sum D^sp\,\frac{\partial}{\partial\pi_s}\qquad (z=Zx,\, p=\pi(Z)) \end{equation}
where $z,p\in\mathcal F(\mathbf M)$ are arbitrary functions. Clearly
\[Zw^2_0=\omega^2_0(Z)+w^2_1\mbox{d}x(Z)=-\frac1{DF'}\pi_1(Z)+w^2_1z=-\frac1{DF'}Dp+w^2_1z,\]
\[Zw^1_0=\omega^1_0(Z)+F\mbox{d}x(Z)=p-\frac{F'}{DF'}Dp+Fz\]
whence the (rather clumsy and in fact needless) classical formula
\[Z=z\frac{\partial}{\partial x}+\left(Fz+p-\frac{Dp}{DF'}F'\right)\frac{\partial}{\partial w^1_0}+\left(w^2_1z-\frac{Dp}{DF'}\right)\frac{\partial}{\partial w^2_0}+\cdots\]
for the variations follows. 

The true infinitesimal transformations $Z$ moreover satisfy the identity
\[\mathcal L_Z\pi=Z\rfloor\mbox{d}\pi+\mbox{d}p=\lambda\pi=\lambda(\omega^1_0-F'\omega^2_0)\qquad (\lambda\in\mathcal F(\mathbf M))\]
where $\mbox{d}\pi=\mbox{d}x\wedge\pi_1-F''\omega^2_1\wedge\omega^2_0.$ The conditions
\[-zDF'-F''\omega^2_1(Z)+p_{w^2_0}+p_{w^1_0}F'=0,\quad F''\omega^2_0(Z)+p_{w^1_1}=0,\quad p_{w^1_s}=0\ (s>1)\]
directly follow. Since $\omega^2_0(Z)-Dp/DF'=-Dp/(F''w^2_2),$ the middle condition reads
\[-\frac1{w^2_2}\left(p_x+Fp_{w^1_0}+w^2_1p_{w^2_0}+w^2_2p_{w^2_1}\right)+p_{w^2_1}=0\]
whence $p=P(Fx-w^1_0,w^2_1x-w^2_0).$ The left--hand condition determines the coefficient $z=Zx$ for the resulting infinitesimal transformation $Z$ (not written here). All infinitesimal transformations $Z$ are obtained in explicit terms.

The remaining "singular case" where $DF'=F''w^2_2=0$ hence $F=Aw^2_1+B$ $(A,B\in\mathbb R)$ is quite simple. Then
\[\omega^1_0-F'\omega^2_0=\mbox{d}w^1_0-(Aw^2_1+B)\mbox{d}x-A(\mbox{d}w^1_0-w^1_1\mbox{d}x)=\mbox{d}(w^1_0-Aw^2_0)\]
and we have the standard basis
\[\mbox{d}t^1=\mbox{d}(w^1_0-Aw^2_0),\ \omega^2_s\ (s\geq 0),\]
hence $K=K(\Omega)=1, \mu=\mu(\Omega)=1.$ Symmetries $\mathbf m$ of diffiety $\Omega$ are the Lie's contact transformations (in the space of variables $x,w^1_0,w^1_1$) depending moreover on parameter $t^1$ and the change of $t^1.$

In this example of diffiety $\Omega$ with $\mu(\Omega)=1,$ the symmetry problem is completely resolved, see \cite[Remark 5.4]{T7}.
\begin{example}\label{Example 5} (\emph{Unsolved symmetry problem.}\/)
Assuming $\mu(\Omega)>1,$ no finite algorithm for determination of \emph{all} symmetries and even of \emph{all} infinitesimal symmetries is occuring in actual literature. Only some particular solutions can be found by available methods.\end{example}
We conclude with the equation $dw^1/dx=F(x,w^1,\ldots,w^m)$ where \mbox{$m>2.$} Let $\mathbf M$ be  the space with coordinates $x, w^1_0, w^k_s$ $(k=2,\ldots,m;$$\,s=0,1,\ldots\,)$ and $\Omega\subset\Phi(\mathbf M)$ diffiety with the basis
\[\omega^1_0=\mbox{d}w^1_0-F(x,w^1_0,\ldots,w^m_0)\mbox{d}x,\ \omega^k_s=\mbox{d}w^k_s-w^k_{s+1}\mbox{d}x\qquad (k=2,\ldots,m;\,s=0,1,\ldots\,).\]
Then
\[D=\frac{\partial}{\partial x}+F\frac{\partial}{\partial w^1_0}+\sum w^k_{s+1}\frac{\partial}{\partial w^k_s}\in\mathcal H,\ \mathcal L_D\omega^1_0=\mbox{d}F-DF\mbox{d}x=\sum F_{w^j_0}\omega^j_0\]
and it follows that  two subcases should to be distinguished.

Either $\partial F/\partial w^k_0=0$ $(k=2,\ldots,m)$ identically or $\partial F/\partial w^k_0=0$ for an~appropriate $k$ $(2\leq k\leq m).$ 
One may observe that $K=K(\Omega)=1$ and, roughly saying, we have diffiety $\Omega(m-1)$ only completed with a~parameter in the first subcase. Let us therefore mention just the second subcase in more detail.

We suppose $\partial F/\partial w^m_0\neq 0$ from now on. Let us introduce the range of indices \mbox{$1\leq j\leq m,$} $2\leq k\leq m,$ $2\leq i\leq m-1$ and the abbreviation $F^j=\partial F/\partial w^j_0.$
Then
\[\mathcal L_D\omega^1_0=F^1\omega^1_0+\sum F^i\omega^ i_0+F^m\omega^m_0,\quad \mathcal L_D\omega^k_s=\omega^k_{s+1}\quad (s=0,1,\ldots)\]
which implies the congruences
\[\mathcal L_D^2\omega^1_0\cong F^m\omega^m_1\ (\text{mod } \omega^j_0,\omega^i_0),\quad \mathcal L_D^3\omega^1_0\cong F^m\omega^m_2\ (\text{mod } \omega^j_0,\omega^i_0,\omega^i_1),\ \ldots \]
and it follows that the forms
\[\pi^1_s=\mathcal L_D^s\omega^1_0,\quad \pi^i_s=\mathcal L_D^s\omega^^ i_0=\omega^i_s\quad (i=2,\ldots,m-1;\,s=0,1,\ldots)\]
may be taken for a~standard basis. We have $K=K(\Omega)=0,$ $\mu=\mu(\Omega)=m-1.$ In terms of the standard basis, all variations $Z$ are given by the series
\[Z=z\frac{\partial}{\partial x}+\sum D^sp^1\frac{\partial}{\partial \pi^1_s}+\sum D^sp^i\frac{\partial}{\partial \pi^i_s}\]
where the functions $z,p^1,\ldots,p^{m-1}\in\mathcal F(\mathbf M)$ are quite arbitrary. One may also obtain the "classical" coefficients $Zw^j_0$ in terms of functions $z$ and $p.$ They follow from the trivial formulae
\[p^1=\pi^1_0(Z)=\omega^1_0(Z)=Zw^1_0-w^1_1z,\quad p^i=\omega^i_0(Z)=Zw^i_0-w^i_1z\]
together with more involved identity
\[Dp^1=\pi^1_1(Z)=(\mathcal L_D\pi^1_0)(Z)=(\mathcal L_D\omega^1_0)(Z)=F^1\omega^1_0(Z)+\sum F^i\omega^i_0(Z)+F^m\omega^m_0(Z)\]
(where $\omega^m_0(Z)=Zw^m_0-w^m_1z$) for the remaining coefficient $Zw^m_0.$

\emph{Particular} infinitesimal transformations $Z$ can be obtained if (e.g.) the additional conditions
\[\mathcal L_Z\pi^1_0=\lambda\pi^1_0+\sum\lambda_i\pi^ i_0,\quad \mathcal L_Z\pi^i_0=\mu^i\pi^i_0+\sum \mu^i_{i'}\pi^{i'}_0\quad (i,i'=2,\ldots,m-1)\]
for the variations are prescribed. Such conditions can be easily resolved if one inserts
\[\begin{array}{l}
\mathcal L_Z\pi^1_0=Z\rfloor\mbox{d}\pi^1_0+\mbox{d}\pi^1_0(Z)=Z\rfloor(\mbox{d}x\wedge\sum F^j\omega^j_0)+\mbox{d}p^1,\\
\mathcal L_Z\pi^i_0=Z\rfloor\mbox{d}\pi^i_0+\mbox{d}\pi^i_0(Z)=Z\rfloor(\mbox{d}x\wedge\pi^i_1)+\mbox{d}p^i
\end{array}\]
which immediately gives the identities
\[\begin{array}{c}
z\sum F^j\omega^j_0+\sum\dfrac{\partial p^1}{\partial w^j_s}\omega^j_s=\lambda\omega^1_0+\sum\lambda_i\omega^ i_0,\\
z\omega^i_1+\sum\dfrac{\partial p^i}{\partial w^j_s}\omega^j_s=\mu^i\omega^i_0+\sum\mu^i_{i'}\omega^{i'}_0
\end{array}\]
in terms of the original basis. This is equivalent to the system
\[zF^1+\frac{\partial p^1}{\partial w^1_0}=\lambda,\  zF^i+\frac{\partial p^1}{\partial w^i_0}=\lambda_i,\  zF^m+\frac{\partial p^1}{\partial w^m_0}=0,\]
\[\frac{\partial p^i}{\partial w^1_0}=\mu^i,\ \frac{\partial p^i}{\partial w^{i'}_0}=\lambda^i_{i'},\ \frac{\partial p^i}{\partial w^m_0}=0,\ z+\frac{\partial p^i}{\partial w^i_1}=0\]
where
\[p^1=p^1(x,w^1_0,\ldots,w^m_0),\  p^i=p^i(x,w^1_0,\ldots,w^{m-1}_0,w^i_1).\]
Since $\lambda,\lambda_i,\mu^i,\lambda^i_{i'}$ are uncertain coefficients, we have only the condition
\[\frac{\partial p^1}{\partial w^m_0}=F^m\frac{\partial p^i}{\partial w^i_1}\qquad\text{where}\qquad \frac{\partial p^i}{\partial w^i_1}=-z.\]
It follows that $z=z(x,w^1_0,\ldots,w^{m-1}_0)$ may be arbitrarily chosen and then
\[p^1=-z\cdot\int F^m\,dw^m_0+q,\quad p^i=-zw^i_1+q^i\]
where $q$ and $q^i$ are arbitrary functions of variables $x,w^1_0,\ldots,w^{m-1}_0.$

We return to the general theory.
\section{The commutative algebra mechanisms}\label{sec4}
Let $\Omega_*:\Omega_0\subset\Omega_1\subset\cdots\subset\Omega=\cup\Omega_l$ be a~given good filtration of a~diffiety $\Omega\subset\Phi(\mathbf M).$ We introduce the graded $\mathcal F$--module
\begin{equation}\label{eq4.1}\mathcal M=\text{Grad}\,\Omega_*=\mathcal M_0\oplus\mathcal M_1\oplus\cdots\qquad (\mathcal M_l=\Omega_l/\Omega_{l-1};\, \Omega_{-1}=0).\end{equation}
It is naturally equipped with $\mathcal F$--linear mappings $Z:\mathcal M\rightarrow\mathcal M$ $(Z\in\mathcal H)$ where
\begin{equation}\label{eq4.2}Z[\omega]=[\mathcal L_Z\omega]\in\mathcal M_{l+1}\qquad ([\omega]\in\mathcal M_l;\,\omega\in\Omega_l;\,l=0,1,\ldots\,)\end{equation}
and the square brackets denote the factorization (\ref{eq4.1}). Even more can be said. The inclusion $\mathcal L_\mathcal H\Omega\subset\Omega$ and the identity $\Omega(\mathcal H)=0$ imply
\[0=X(\omega(Y))=(\mathcal L_X\omega)(Y)+\omega([X,Y])=\omega([X,Y])\qquad (X,Y\in\mathcal H;\,\omega\in\Omega)\]
hence $[\mathcal H,\mathcal H]\subset\mathcal H.$ It follows that
\[(XY-YX)[\omega]=[\mathcal L_{[X,Y]}\omega]=0\in\mathcal M_{l+2}\qquad (X,Y\in\mathcal H;\,[\omega]\in\mathcal M_l)\]
and $\mathcal M$ may be therefore regarded as $\mathcal A$--module where
\begin{equation}\label{eq4.3}\mathcal A=\mathcal A_0\oplus\mathcal A_1\oplus\cdots\qquad (\mathcal A_0=\mathcal F,\mathcal A_1=\mathcal H,\mathcal A_2=\mathcal H\odot\mathcal H,\ldots\,)\end{equation}
is the $\mathcal F$--algebra of homogeneous polynomials over the $\mathcal F$--linear space $\mathcal H.$
In more detail,
\begin{equation}\label{eq4.4}\begin{array}{c}\mathcal A_r\cdot\mathcal M_l\in\mathcal M_{l+r},\quad  Z_1\cdots Z_r[\omega]=[\mathcal L_{Z_1}\cdots\mathcal L_{Z_r}\omega]\\ \\ (Z_1,\ldots,Z_r\in\mathcal H;\, [\omega]\in\mathcal M_l).\end{array}\end{equation}
At this place, the advanced mechanisms of commutative algebra can be applied and we refer to the excellent survey \cite{T12}.
However, a~kind patience of the reader is assumed for two reasons which are as follows.
\begin{remark}\label{r4.1} 
We are interested in \emph{homogeneous} commutative algebra: only homogeneous submodules of $\mathcal M$ and in particular homogeneous ideals of algebra $\mathcal A$ make a~good sense in our theory. The common textbooks are invented for quite other aims and the general concepts and main achievements \cite {T12} need  some slight adaptation here. However, all our reasonings will be of quite simple nature and can be directly verified, see also \cite{T6}. \end{remark}
\begin{remark}\label{r4.2}
We deal with $\mathcal F$--algebra $\mathcal A$ while the common textbooks concern the polynomial algebra over a~\emph{field}. Therefore the common results can be rigorously applied only at a~fixed point $\mathbf P\in\mathbf M$ where the structural ring $\mathcal F$ turns into its localization $\mathcal F_\mathbf P=\mathbb R.$ However, we postulate the existence of $\mathcal F$--bases and $\mathcal A$--bases in all modules under consideration. (Otherwise no explicit algorithm of calculations can be performed.) It follows that then the "behaviour at $\mathbf P$" can be locally extended or, alternatively saying, the classical commutative algebra can be applied to $\mathcal F$-- and $\mathcal A$--modules as well.
\end{remark}
The following two results serve for a~transparent example.
\begin{theorem}\label{t4.1}
Every $\mathcal A$--submodule $\mathcal N\subset\mathcal M$ is finitely $\mathcal A$--generated.
\end{theorem}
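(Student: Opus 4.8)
The plan is to recognize the assertion as a graded version of the Hilbert basis theorem: the algebra $\mathcal{A}$ is a polynomial ring over $\mathcal{F}$ in finitely many variables, the module $\mathcal{M}$ is finitely $\mathcal{A}$-generated, and consequently every submodule $\mathcal{N}$ inherits the Noetherian property. I would organize the argument in three structural steps, with the genuine difficulty postponed to the last.

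First I would pin down the algebra. By Definition~\ref{def2.2} the total derivatives $D_1,\ldots,D_n$ (where $n=n(\Omega)$ is the codimension) furnish a basis of $\mathcal{H}=\mathcal{A}_1$, and the commutativity $[\mathcal{H},\mathcal{H}]\subset\mathcal{H}$ established just before \eqref{eq4.3} shows that the homogeneous polynomials \eqref{eq4.3} over $\mathcal{H}$ form the polynomial ring $\mathcal{A}=\mathcal{F}[D_1,\ldots,D_n]$ in $n$ commuting variables. Next I would verify that $\mathcal{M}$ is finitely $\mathcal{A}$-generated. The second half of the good-filtration condition \eqref{eq2.3}, namely $\Omega_l+\mathcal{L}_{\mathcal{H}}\Omega_l=\Omega_{l+1}$ for $l$ large enough, translates through \eqref{eq4.2} and \eqref{eq4.4} into $\mathcal{A}_1\cdot\mathcal{M}_l=\mathcal{M}_{l+1}$ for all $l\geq L$. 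Hence $\mathcal{M}$ is generated over $\mathcal{A}$ by the finitely many lowest terms $\mathcal{M}_0\oplus\cdots\oplus\mathcal{M}_L$, each of which is a finite-dimensional $\mathcal{F}$-module because the $\Omega_l$ are finite-dimensional; so $\mathcal{M}$ is finitely $\mathcal{A}$-generated.

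With these in hand, the conclusion follows from the Noetherian property: over a polynomial ring in finitely many variables a finitely generated module is Noetherian, so every submodule is finitely generated. In the homogeneous framework of Remark~\ref{r4.1} I would take $\mathcal{N}=\bigoplus_l\mathcal{N}_l$ with $\mathcal{N}_l=\mathcal{N}\cap\mathcal{M}_l$ and examine the ascending chain of $\mathcal{A}$-submodules generated by $\mathcal{N}_0,\ldots,\mathcal{N}_d$ as $d$ increases; once this chain stabilizes at some $d$, the submodule $\mathcal{N}$ is generated by $\mathcal{N}_0\oplus\cdots\oplus\mathcal{N}_d$, and finite $\mathcal{A}$-generation reduces to the finite-dimensionality of the individual $\mathcal{N}_l$.

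The main obstacle — and precisely the subject of Remark~\ref{r4.2} — is that the structural ring $\mathcal{F}$ of smooth functions is not Noetherian, so the classical Hilbert basis theorem cannot be applied to $\mathcal{F}[D_1,\ldots,D_n]$ verbatim and one cannot assume a priori that the chain above stabilizes. I would resolve this by arguing pointwise: at a fixed $\mathbf{P}\in\mathbf{M}$ the localization $\mathcal{F}_{\mathbf{P}}=\mathbb{R}$ is a field, $\mathcal{A}_{\mathbf{P}}=\mathbb{R}[D_1,\ldots,D_n]$ is genuinely Noetherian, and the chain stabilizes there, yielding finitely many homogeneous generators of $\mathcal{N}$ over $\mathbb{R}$. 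The postulated existence of $\mathcal{F}$-bases and $\mathcal{A}$-bases then permits lifting this finite pointwise generating set to a finite $\mathcal{A}$-module generating set on a neighborhood of $\mathbf{P}$, so that the behaviour at $\mathbf{P}$ extends locally and the result holds in the $\mathcal{F}$-module category, exactly as Remark~\ref{r4.2} anticipates.
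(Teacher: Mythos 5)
Your proposal is correct and takes essentially the same approach as the paper: the paper's entire proof of Theorem~\ref{t4.1} is to identify it with the familiar Hilbert basis theorem and to handle the non--Noetherian ring $\mathcal F$ exactly as you do, via Remark~\ref{r4.2} --- rigorous application of classical commutative algebra at a fixed point $\mathbf P$ where $\mathcal F_{\mathbf P}=\mathbb R$, followed by extension using the postulated existence of finite $\mathcal F$-- and $\mathcal A$--bases (the paper even states it supposes a finite $\mathcal A$--basis of $\mathcal N$). Your added verification that $\mathcal M$ itself is finitely $\mathcal A$--generated from the good--filtration condition is left implicit in the paper, but it is the same argument in fuller detail.
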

This is the familiar \emph{Hilbert basis theorem}. In our theory, we suppose even the existence of a~finite $\mathcal A$--basis of $\mathcal N.$
\begin{theorem}\label{t4.2}
If $\mathcal M$ is regarded as $\mathcal F$-- module, then
\begin{equation}\label{eq4.5}
\dim\mathcal M_l=e_\nu\binom l\nu+\cdots + e_0\binom l0\qquad (\,l \text{ large enough})  \end{equation}
is the Hilbert polynomial function of the variable $l.$
\end{theorem}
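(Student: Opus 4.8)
The plan is to recognize Theorem~\ref{t4.2} as the classical Hilbert--Serre statement on the Hilbert function of a finitely generated graded module over a polynomial ring, transplanted to the present $\mathcal F$--algebra setting. First I would fix the good filtration $\Omega_*$ and observe that, by Definition~\ref{def2.2}, the total derivatives $D_1,\ldots,D_n$ form an $\mathcal F$--basis of $\mathcal H=\Omega^\perp$, so that $\mathcal A=\mathcal F[D_1,\ldots,D_n]$ is the polynomial algebra in $n=n(\Omega)$ indeterminates. By Theorem~\ref{t4.1} the graded $\mathcal A$--module $\mathcal M$ is finitely $\mathcal A$--generated, and the good filtration guarantees that each piece $\mathcal M_l=\Omega_l/\Omega_{l-1}$ is a free $\mathcal F$--module of finite rank, so that $\dim\mathcal M_l$ is well defined.

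Following Remark~\ref{r4.2}, I would then pass to a fixed point $\mathbf P\in\mathbf M$ at which the coefficient ring $\mathcal F$ localizes to the field $\RR$; the postulated existence of $\mathcal F$--bases renders the ranks $\dim\mathcal M_l$ locally constant, so it suffices to establish the polynomial behaviour over $\RR$ and to extend it afterwards. Over the field the argument is an induction on the number of variables $n$. The base case $n=0$ is immediate: here $\mathcal A=\RR$, a finitely generated module is concentrated in finitely many degrees, and $\dim\mathcal M_l=0$ for $l$ large, i.e.\ the zero polynomial. For the inductive step I would use multiplication by the last generator $D_n$, which yields for each $l$ the exact sequence
\[
0\longrightarrow K_l\longrightarrow\mathcal M_l\xrightarrow{\ D_n\ }\mathcal M_{l+1}\longrightarrow C_{l+1}\longrightarrow 0,
\]
where $K=\ker D_n$ and $C=\mathcal M/D_n\mathcal M$ are finitely generated graded modules over $\mathcal A/(D_n)\cong\RR[D_1,\ldots,D_{n-1}]$. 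By the inductive hypothesis their Hilbert functions $\dim K_l$ and $\dim C_l$ coincide with polynomials in the binomial basis for $l$ large.

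A rank count in the displayed sequence then gives the first difference
\[
\dim\mathcal M_{l+1}-\dim\mathcal M_l=\dim C_{l+1}-\dim K_l,
\]
which is therefore eventually polynomial. The concluding step is the elementary finite--difference lemma: since $\Delta\binom{l}{k}=\binom{l}{k-1}$, any function whose first difference is eventually an integer combination of the $\binom{l}{k}$ is itself eventually such a combination, recovered by summation. This simultaneously produces the asserted shape $e_\nu\binom l\nu+\cdots+e_0\binom l0$ and, because $\dim\mathcal M_l\in\ZZ$ while the $\binom lk$ constitute a $\ZZ$--basis of the integer--valued polynomials, forces the coefficients $e_i$ to be integers. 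The only genuine subtlety I anticipate is the passage between the structural ring $\mathcal F$ and the field $\RR$; once the local constancy of ranks from Remark~\ref{r4.2} is invoked, the remainder is the standard inductive computation together with the finite--difference bookkeeping, entirely in the spirit of the ``simple nature'' promised in Remark~\ref{r4.1}.
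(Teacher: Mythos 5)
Your proposal is correct and is essentially the paper's own approach: the paper gives no proof of Theorem~\ref{t4.2} at all, presenting it as the classical Hilbert--Serre theorem imported from commutative algebra (the survey \cite{T12}) and made applicable to $\mathcal F$--modules by the localization and basis postulates of Remark~\ref{r4.2}. Your argument---localizing at a point $\mathbf P$ so that $\mathcal F_{\mathbf P}=\mathbb R$, then running the standard induction on $n$ via the multiplication--by--$D_n$ exact sequence and the finite--difference summation in the binomial basis---is precisely the textbook proof of the result the paper delegates to the literature.
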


We recall that the use of the combinatorial factors $\binom lk=l!/(k!(l-k)!)$ ensures the \emph{integer coefficients} $e_\nu,\ldots,e_0.$ Assuming $e_\nu\neq 0,$ we may even denote
\begin{equation}\label{eq4.6} \nu=\nu(\Omega)\neq 0,\ \mu=\mu(\Omega)=e_\nu \end{equation}
since these values \emph{do not depend} on the choice of the primary filtrations $\Omega_*,$ see \cite{T6} for easy proof. 
Clearly $\nu(\Omega)\leq n(\Omega)-1,$ $1\leq \mu(\Omega)$ and in accordance with the theory of the exterior differential systems \cite{T13, T14, T15} we may (a~somewhat formally) declare that \emph{the solutions of diffiety $\Omega$ depend on} (better: \emph{can be parametrized by}) $\mu(\Omega)$ \emph{arbitrary functions of $\nu(\Omega)+1$ variables.} In the "degenerate" case of a~finite--dimensional underlying space $\mathbf M,$ the Hilbert polynomial vanishes and we put $\nu(\Omega)=-1$ and $\mu(\Omega)=\dim \Phi/\Omega$ (apply the Frobenius theorem).

The following topics are not currently investigated in literature. They were initiated by a~brief notice \cite{T16} and thoroughly discussed in \cite{T6}.
We preserve the same notation of \mbox{$\mathcal F$--modules} and $\mathcal A$--algebra as before. More rigorously, the reasonings should be "localized" at a~fixed point $\mathbf P\in\mathbf M$ and this would provide the $\mathbb R$--linear spaces and polynomials over~$\mathbb R$ in better accordance with the common algebra.

Let $Z_1,\ldots,Z_n$ be a~given basis of $\mathcal F$--module $\mathcal H$ and $\mathcal A(i)\subset\mathcal A$ the ideal generated by $Z_1,\ldots,Z_i$ where $0\leq i\leq n$ is a~fixed integer. In particular $\mathcal A(0)=0$ and
\[ \mathcal A(n)=\mathcal A_1\oplus\mathcal A_2\oplus\cdots =\mathfrak m \]
is the (so--called \emph{improper}) \emph{maximal ideal} of algebra $\mathcal A.$ Let us introduce the graded factormodules
\[\mathcal M(i)=\mathcal M/\mathcal A(i)\mathcal M=\mathcal M(i)_0\oplus\mathcal M(i)_1\oplus\cdots \quad (\mathcal M(i)_l=\mathcal M_l/\mathcal A(i)\mathcal M\cap\mathcal M_l).\]
In particular $\mathcal M(0)=\mathcal M$ and $\mathcal M(0)_l=\mathcal M_l.$ They are $\mathcal A$--modules as well and we may consider multiplication mappings
\begin{equation}\label{eq4.7} Z_{i+1}: \mathcal M(i)_l\rightarrow \mathcal M(i)_{l+1}\quad (i=0,\ldots, n-1;\, l=0,1,\ldots \,) \end{equation}
which are of the highest importance in many respects.
\begin{definition}\rm
\label{def4.1}\label{def4.1}\rm
 Basis $Z_1,\ldots, Z_n$ of the module $\mathcal H$ is called \emph{regular} if (\ref{eq4.7}) are injective mappings for all $l\geq 0,$ \emph{quasiregular} if (\ref{eq4.7}) are injective for all $l\geq 1$ and \emph{ordinary} (in better accordance with literature \cite{T15},
or \emph{generic} \cite{T6}
) if (\ref{eq4.7}) are injective mappings for all $l$ large enough. \end{definition}

Our crucial result reads as follows.

\begin{theorem}
\label{t4.3a} The ordinary basis of the module $\mathcal H$ exists.
\end{theorem}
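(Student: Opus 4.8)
The plan is to recognize the ``ordinary'' condition of Definition~\ref{def4.1} as the classical notion of a \emph{filter--regular} sequence of linear forms, and then to produce such a sequence by prime avoidance over the infinite field $\mathbb R$. Following Remark~\ref{r4.2}, I would first localize at a fixed point $\mathbf P\in\mathbf M$, so that $\mathcal A$ becomes the genuine polynomial ring $\mathbb R[Z_1,\dots,Z_n]$ and $\mathcal M$ a finitely $\mathcal A$--generated graded module (finitely generated by Theorem~\ref{t4.1} applied to $\mathcal N=\mathcal M$, together with the finite--dimensionality of each $\mathcal M_l=\Omega_l/\Omega_{l-1}$). The entire construction is carried out at $\mathbf P$, the return to the $\mathcal F$--level being deferred to the extension principle recorded in Remark~\ref{r4.2}.

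The decisive reformulation is this: the map $Z_{i+1}:\mathcal M(i)_l\to\mathcal M(i)_{l+1}$ is injective for all large $l$ exactly when the graded submodule $(0:_{\mathcal M(i)}Z_{i+1})$ is finite--dimensional, i.e.\ $\mathfrak m$--primary. Indeed, every associated prime of this kernel contains $Z_{i+1}$, so finiteness holds as soon as $Z_{i+1}$ lies outside every \emph{non--maximal} associated prime of $\mathcal M(i)$; conversely, if $Z_{i+1}$ belongs to some such prime $\mathfrak p=\mathrm{Ann}(x)$, then the submodule $\mathcal A x\cong\mathcal A/\mathfrak p$ has positive dimension and sits inside the kernel, so injectivity fails in infinitely many degrees. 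Thus the $(i+1)$st step of Definition~\ref{def4.1} is injective for large $l$ precisely when $Z_{i+1}\notin\mathfrak p$ for all $\mathfrak p\in\mathrm{Ass}(\mathcal M(i))$ with $\mathfrak p\neq\mathfrak m$.

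The basis is then built inductively. Assume $Z_1,\dots,Z_i$ already chosen, linearly independent and filter--regular so far, and form $\mathcal M(i)=\mathcal M/\mathcal A(i)\mathcal M$. Since $\mathcal A(i)$ annihilates $\mathcal M(i)$, the latter is a finitely generated graded module over $\mathbb R[Z_{i+1},\dots,Z_n]$ and has only finitely many associated primes. Each non--maximal homogeneous associated prime $\mathfrak p$ meets $\mathcal H=\mathcal A_1$ in a \emph{proper} subspace, for otherwise $\mathfrak p\supseteq\mathfrak m$. Adjoining the further proper subspace $\mathrm{span}(Z_1,\dots,Z_i)$, I obtain a finite family of proper subspaces of the finite--dimensional $\mathbb R$--space $\mathcal H$; because $\mathbb R$ is infinite, their union is proper, so a generic $Z_{i+1}\in\mathcal H$ avoids all of them at once. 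This $Z_{i+1}$ is independent of $Z_1,\dots,Z_i$ and filter--regular on $\mathcal M(i)$, and after $n$ steps an ordinary basis is obtained at $\mathbf P$.

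The step deserving the most care is the transition from $\mathbf P$ to an $\mathcal F$--basis on a neighbourhood. Injectivity of each $Z_{i+1}$ in high degrees is an open (full column rank) condition on the defining matrices, but one must know that only \emph{finitely many} degrees need verification. This is guaranteed by the Hilbert--function bookkeeping tied to Theorem~\ref{t4.2}: once $Z_{i+1}$ is filter--regular, injectivity in degree $l-1$ gives $\dim\mathcal M(i+1)_l=\dim\mathcal M(i)_l-\dim\mathcal M(i)_{l-1}$ for large $l$, so each filter--regular element lowers the degree of the Hilbert polynomial by one; hence after $\nu(\Omega)+1\le n(\Omega)$ steps the module is finite--dimensional and the remaining maps are injective trivially. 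With only finitely many degrees to control, the local extension postulated in Remark~\ref{r4.2} promotes the point--wise ordinary basis to an ordinary basis of $\mathcal H$ over $\mathcal F$ near $\mathbf P$, which completes the proof.
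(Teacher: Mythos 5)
Your proposal is correct and takes essentially the same route as the paper: both reduce the ``ordinary'' condition to choosing $Z_{i+1}$ outside the finitely many associated primes of $\mathcal M(i)$ distinct from $\mathfrak m$, secure such a choice by prime avoidance (each such prime meets $\mathcal H$ in a proper subspace, and a finite union of proper subspaces cannot cover $\mathcal H$ over the infinite field $\mathbb R$), and proceed inductively through $\mathcal M(0),\ldots,\mathcal M(n-1)$. The only cosmetic difference is that the paper disposes of the maximal ideal by truncating to the tail $\mathcal M_{c+}$, whose set of associated primes omits $\mathfrak m$, whereas you keep $\mathcal M(i)$ and show the kernel of $Z_{i+1}$ is $\mathfrak m$--primary and hence vanishes in high degrees --- equivalent devices, as the paper itself notes in Remark~\ref{r4.4}.
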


In order to prove this "simple" assertion, a~slight reformulation is useful. 
We recall the $c$--lift $\Omega_{*+c}$ $(c=0,1,\ldots)$ of the filtration $\Omega_*$ from Remark \ref{rem2.1}.  It is defined by
\begin{equation}\label{eq4.8} \Omega_{*+c}=\bar\Omega_*:\bar\Omega_0=\Omega_c\subset\bar\Omega_1=\Omega_{c+1}\subset\cdots\subset\Omega=\cup\bar\Omega_l.
 \end{equation}
Roughly saying, the lift corresponds to the classical concept of the prolongation of the "initial" Pfaffian system $\omega=0$ $(\omega\in\Omega_0),$ see Appendix. In classical theory, the prolongation procedure is rather involved since it starts with the "initial" submodule $\Omega_0\subset\Omega$ and the final result $\Omega$ appears after lengthy calculations of "regular" integral elements \cite{T15}.
In our approach, the prolongation is expressed by simple requirement (\ref{eq2.3}) which \emph{does not} exclude the "singular solutions" and "partial prolongations".
Therefore the following "prolongation to involutiveness" result with very clear proof proposed in \cite[point $(\kappa\iota\iota)$ on page 136]{T6} is worth attention.
\begin{definition}\label{def4.2}\rm
Filtration $\Omega_*$  is called \emph{involutive} if there exists a~quasiregular basis and moreover $\Omega_l+\mathcal L_\mathcal H\Omega_l=\Omega_{l+1}$ (equivalently $\mathcal H\mathcal M_l=\mathcal M_{l+1}$) for $l\geq 0.$
\end{definition}
We may state the common reformulation of the latter Theorem~\ref{t4.3a}.
\begin{theorem}
\label{t4.3} Filtration $\Omega_{*+c}$ is involutive if $c$ is large enough.
\end{theorem}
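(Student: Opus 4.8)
The statement reduces, via the equivalence of Theorems~\ref{t4.3a} and \ref{t4.3}, to producing an involutive lift of the given filtration. Since $\Omega_{*+c}$ has graded module $\mathcal M^{(c)}=\mathcal M_c\oplus\mathcal M_{c+1}\oplus\cdots$ (merely the truncation of $\mathcal M$), the second requirement of Definition~\ref{def4.2}, namely $\mathcal H\mathcal M_l=\mathcal M_{l+1}$ for all $l\geq c$, is automatically satisfied once $c$ is chosen past the threshold where the good filtration becomes surjective; this is guaranteed by the condition $\Omega_l+\mathcal L_{\mathcal H}\Omega_l=\Omega_{l+1}$ (for $l$ large) built into Definition~\ref{def2.1}. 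Thus the entire content of the theorem lies in the \emph{quasiregularity} of some basis of $\mathcal H$ after lifting: I must show that for $c$ large enough there is a basis $Z_1,\ldots,Z_n$ of $\mathcal H$ making the multiplication maps $Z_{i+1}\colon\mathcal M^{(c)}(i)_l\to\mathcal M^{(c)}(i)_{l+1}$ injective for all $l\geq 1$. Because the lifted module is just $\mathcal M$ re-indexed from degree $c$, this is the same as asking that the ordinary basis guaranteed by Theorem~\ref{t4.3a} have its "injectivity from some degree onward" threshold absorbed by the lift.

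First I would fix an \emph{ordinary} (generic) basis $Z_1,\ldots,Z_n$, whose existence is exactly Theorem~\ref{t4.3a}; by definition the maps (\ref{eq4.7}) are injective for all $l\geq l_0$ for some $l_0$ depending on the basis. Then I would take $c\geq l_0$ and examine the lifted filtration (\ref{eq4.8}). The key observation is that $\mathcal M^{(c)}(i)_l=\mathcal M(i)_{c+l}$, so the map $Z_{i+1}\colon\mathcal M^{(c)}(i)_l\to\mathcal M^{(c)}(i)_{l+1}$ is literally the map $Z_{i+1}\colon\mathcal M(i)_{c+l}\to\mathcal M(i)_{c+l+1}$. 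For $l\geq 1$ we have $c+l\geq l_0+1>l_0$, so this map is injective by the choice of ordinary basis. Hence the \emph{same} basis $Z_1,\ldots,Z_n$ is quasiregular for the lifted filtration, and together with the automatic surjectivity $\mathcal H\mathcal M^{(c)}_l=\mathcal M^{(c)}_{l+1}$ this verifies Definition~\ref{def4.2}. The delicate point I must be careful about is the passage between the two formulations, since proving Theorem~\ref{t4.3a} itself is the substantive step: to produce an ordinary basis one argues pointwise at $\mathbf P\in\mathbf M$ (Remark~\ref{r4.2}), where $\mathcal A$ becomes the polynomial ring over $\mathbb R$ and $\mathcal M$ a finitely generated graded module (Theorem~\ref{t4.1}). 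There one invokes the classical existence of a system of parameters / generic linear forms along which the associated graded module eventually has surjective (equivalently, in high degree, bijective) multiplication, then extends the chosen generic directions to an $\mathcal F$--basis by the localization-extension principle of Remark~\ref{r4.2}.

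The main obstacle I anticipate is precisely this extension from the pointwise (field) picture to the $\mathcal F$--module picture. Over the field $\mathcal F_{\mathbf P}=\mathbb R$, ordinariness is a Zariski-open (generic) condition on the choice of linear forms $Z_1,\ldots,Z_n$, so a generic basis works; but I must guarantee that the genericity holds not merely at $\mathbf P$ but on a neighborhood, and that the threshold $l_0$ can be taken uniform. This is where the \emph{finiteness} supplied by Theorems~\ref{t4.1} and~\ref{t4.2} is essential: the Hilbert polynomial (\ref{eq4.5}) stabilizes the dimensions $\dim\mathcal M_l$ for $l$ large, the injectivity of (\ref{eq4.7}) is detected by finitely many determinantal rank conditions, and these conditions, being open and nonvanishing at $\mathbf P$, persist locally. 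I would therefore structure the argument by first establishing Theorem~\ref{t4.3a} through this generic-basis-plus-semicontinuity route, and then obtain Theorem~\ref{t4.3} as the immediate re-indexing corollary described above, remarking that quasiregularity of the lift is nothing but ordinariness of the original basis with the transient low-degree failures shifted below degree~$1$ by the choice of $c$.
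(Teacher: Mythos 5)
Your proposal is correct in outline, but it distributes the work differently from the paper, so a comparison is in order. You make explicit the re-indexing direction (ordinary basis $\Rightarrow$ involutive lift) that the paper dismisses as a ``common reformulation'': fix an ordinary basis with injectivity threshold $l_0$, take $c\geq l_0$, note that the surjectivity half of Definition~\ref{def4.2} is automatic from (\ref{eq2.3}); this part is sound, with two small repairs needed --- the graded module of $\Omega_{*+c}$ has degree-zero component $\Omega_c$ rather than $\mathcal M_c$ (harmless, since quasiregularity only tests $l\geq 1$), and the identification $\mathcal M^{(c)}(i)_l=\mathcal M(i)_{c+l}$, which you assert, does hold for $l\geq 1$ but deserves a line of proof using that the ideal $\mathcal A(i)$ is generated in degree one. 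The substantive step, Theorem~\ref{t4.3a}, you outsource to classical commutative algebra at a point plus a semicontinuity extension, whereas the paper proves it self-containedly: every maximal annihilator ideal $\text{Ann}\,\mathcal N$ $(\mathcal N\subset\mathcal A[\omega])$ is prime, so $Z$-multiplication on $\mathcal M$ fails to be injective only for $Z\in\bigcup\,\mathfrak p\cap\mathcal H$ $(\mathfrak p\in\text{Ass}\,\mathcal M)$, a \emph{finite} union of subspaces which are proper unless $\mathfrak p=\mathfrak m$; and --- this is exactly where ``$c$ large enough'' enters --- the truncation $\mathcal M_{c+}$ expels $\mathfrak m$ from $\text{Ass}$, because the submodules isomorphic to $\mathcal A/\mathfrak m=\mathbb R$ generate a finite-dimensional $\mathcal F$-module confined to degrees $<c$; induction over the factormodules $\mathcal M(1),\ldots,\mathcal M(n-1)$ finishes. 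Your citation of the field case is mathematically legitimate (the classical existence of such generic, i.e.\ filter-regular, linear forms is proved by precisely this prime-avoidance argument, and your determinantal semicontinuity plays the role of the paper's Remark~\ref{r4.2} postulate), so what the paper's route buys is self-containedness --- important here since Remarks~\ref{r4.1}--\ref{r4.2} stress that textbook results require adaptation to the homogeneous $\mathcal F$-algebra setting --- while your route buys a cleaner separation between the algebraic input and the re-indexing. One genuine mis-statement should be corrected: the classical property you need is \emph{eventual injectivity} of multiplication by a generic linear form, not ``surjective (equivalently, in high degree, bijective)'' multiplication; on the modules $\mathcal M(i)$ with $i<n-1$ a generic linear form is eventually injective but in general never surjective, and your own argument uses only injectivity, so the slip does not propagate, but as stated the cited result is false.
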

\begin{remark}\label{r4.3}
Before passing to the proof, let us recall some concepts from commutative algebra. They concern the graded ideals of polynomial algebra $\mathcal A$ and the graded \mbox{$\mathcal A$--module} $\mathcal M.$
A~proper $\mathcal A$--submodule $\mathfrak p\subset\mathcal A$ of algebra $\mathcal A$ is called an~\emph{ideal} $\mathfrak p$ of algebra $\mathcal A,$  so we \emph{exclude} $\mathfrak p=\mathcal A$ but the zeroth ideal $\mathfrak p=0$ is admitted.
An~ideal $\mathfrak p$ is \emph{prime} if $u\cdot v\in\mathfrak p$ $(u,v\in\mathcal A)$ implies either $u\in\mathfrak p$ or $v\in\mathfrak p.$ For every $\mathcal A$--module $\mathcal M,$ $\text{Ann}\,\mathcal M\subset\mathcal A$ is the ideal including all $u\in\mathcal A$ such that $u\mathcal M=0$ and the ideal $\text{Nil}\,\mathcal M\subset\mathcal A$ includes all $u\in\mathcal A$ such that $u^r\mathcal M=0$ for $r$ large enough. (These ideals can be related to the \emph{Cauchy characteristics}, see \cite[VII 6]{T6}.)
For every $\mathcal M,$ $\text{Ass}\,\mathcal M$ is the set of all prime ideals $\mathfrak p$ such that there exists the submodule of $\mathcal M$ isomorphic to $\mathcal A/\mathfrak p.$ If $\mathcal M\neq 0$ is nontrivial, then $\text{Ass}\,\mathcal M$ \emph{is a~finite and nonempty set} (which is stated without proof here). We also recall the maximal ideal $\mathfrak m$ where trivially $\mathfrak m\cap\mathcal H=\mathcal A_0=\mathcal H$ but otherwise $\mathfrak p\cap\mathcal H\subset\mathcal H$ is a~\emph{proper} $\mathcal F$--linear subspace for any ideal $\mathfrak p\neq\mathfrak m.$
\end{remark}
 After this preparation, we turn to the proof which consists of a~few short steps.
\begin{proof}
Let us consider the submodule $\mathcal A[\omega]\subset\mathcal M$ for a~certain nonvanishing $[\omega]\in\mathcal M.$ Let $\mathfrak p$ be the maximal element in the set of all ideals $\text{Ann}\,\mathcal N$ for all submodules $\mathcal N\subset\mathcal A[\omega].$ Then ideal $\mathfrak p$ is prime and $\mathcal N$ isomorphic to $\mathcal A/\mathfrak p.$ Since $\mathcal N\subset\mathcal M$ is a~submodule, it follows that $\mathfrak p\in\mbox{Ass}\,\mathcal M.$

Let $u\in\mathcal A.$ Multiplication $u:\mathcal M\rightarrow\mathcal M$ is injective if and only if
\[u\notin\cup\,\text{Ann}\,\mathcal A[\omega]\quad (\text{all }[\omega]\in\mathcal M,\, [\omega]\neq 0).\]
However we have seen that
\[\cup\,\text{Ann}\,\mathcal A[\omega]\subset\cup\,\mathfrak p\quad (\text{all }\mathfrak p\in\,\text{Ass}\,\mathcal M)\]
hence $u\notin\cup\,\mathfrak p$ ensures the injectivity. If in particular $u=Z\in\mathcal H\subset\mathcal A$ then $Z:\mathcal M\rightarrow\mathcal M$ is injective if and only if
\begin{equation}\label{eq4.9} Z\notin\cup\,\mathfrak p\cap\mathcal H\quad (\text{all }\mathfrak p\in\,\text{Ass}\,\mathcal M). \end{equation}
It follows that such $Z$ exists if and only if  $\mathfrak m\notin\text{Ass}\,\mathcal M.$

The last condition is satisfied if the module $\mathcal M$ is replaced by the submodule $\mathcal M_{c+}=\mathcal M_c\oplus\mathcal M_{c+1}\oplus\cdots\subset\mathcal M.$ Indeed, $\mathfrak m\in\,\text{Ass}\,\mathcal M$ means that there exist submodules of $\mathcal M$ isomophic to $\mathcal A/\mathfrak m=\mathbb R.$ All such submodules are moreover lying in $\mathcal M_0\oplus\cdots\oplus\mathcal M_{c-1}$ if $c$ is large enough since they together generate a~finite--dimensional $\mathcal F$--submodule of $\mathcal M.$

Summarizing, there exist 
\[Z\notin\cup\,\mathfrak p\quad (\mathfrak p\in\text{Ass}\,\mathcal M_{c+},\, c\text{ large})\] 
with the injective mapping $Z:\mathcal M_{c+}\rightarrow\mathcal M_{c+}$ hence injective 
\[Z:\mathcal M_l\rightarrow\mathcal M_{l+1}\quad (l\text{ large enough}).\]
Since $\mathcal M_l=\mathcal M(0)_l,$ we obtain the mappings (\ref{eq4.7}) with $i=0$ and $Z_1=Z.$ 

Remaining conditions (\ref{eq4.7}) with $i=1\ldots,n-1$ can be discussed by using the same arguments successively applied to the factormodules $\mathcal M(1),\ldots,\mathcal M(n-1)$ instead of the $\mathcal A$--module $\mathcal M=\mathcal M(0).$
\end{proof}
\begin{remark}\label{r4.4}
It can be proved that the sets $\text{Ass}\,\mathcal M, \text{Ass}\,\mathcal M_{c+}$ of the prime ideals differ each from the other only in the presence of the ideal $\frak m.$ It follows that the conditions
\begin{equation}\label{eq4.10} 
Z_{i+1}\notin\cup\,\frak p\cap\mathcal H\quad (\frak p\in\,\text{Ass}\,\mathcal M(i),\,\frak p\neq\frak m,\,i=0,\ldots,m-1\,) 
\end{equation}
determine all ordinary sequences $Z_1,\ldots,Z_n\in\mathcal H.$
So the terms $Z_1,\ldots,Z_n$ of the ordinary basis should not belong to a~certain finite family of proper linear subspaces of module $\mathcal H.$
\end{remark}
Our next aim is to modify the lower--order terms $\Omega_l$ of a~given filtration $\Omega_*$ in such a~manner that the \emph{ordinary basis} of Theorem~\ref{t4.3a} turns into the \emph{regular basis} for the adapted filtration $\bar\Omega_*.$ Alternatively saying, we may take the lift $\Omega_{*+c}$ with the \emph{quasiregular basis} and the initial term $\Omega_c$ of the lift should be appropriately modified in order to get a~\emph{regular basis}. This is possible for the \emph{controllable diffiety} $\Omega,$ if certain obstacles $\mathcal R^a\subset\Omega$ (submodules; $a=0,\ldots,\nu(\Omega)$) are vanishing.
\begin{remark}\label{r4.5} We leave the pure algebra from now on. The above results expressed in terms of $Z$--multiplication in $\mathcal A$--modules $\mathcal M$ and various injectivity requirements will be reinterpreted by using Lie derivatives $\mathcal L_Z$ acting on filtrations $\Omega_*$ and various Ker--concepts. Instead of vector fields satisfying conditions like (\ref{eq4.9}) or (\ref{eq4.10}), we shall briefly speak of "not too special" vector fields $Z.$ Then the corresponding $\text{Ker}_Z$--modules are of the least possible dimension.
\end{remark}
\section{Standard filtrations}\label{sec5}
We are passing to the lengthy reconstruction of the lower--order terms $\Omega_l$ of a~given good filtration $\Omega_*$ of a~diffiety $\Omega.$ This is equivalent to the reconstruction of the initial terms of any lift $\Omega_{*+c}.$ So we may suppose that $\Omega_*$ is involutive filtration without any loss of generality. The calculations proper are of independent interests. Intermediate results are stated only at the appropriate places in a~convenient time. Recall that our aim is to obtain a~regular basis of module $\mathcal H$ and this is possible "modulo certain obstacles $\mathcal R^a$" which are however of the highest importance too.
\begin{definition}
\label{def5.1}\rm
For any submodule $\Theta\subset\Phi=\Phi (\mathbf M)$ and a vector field $X\in\Theta^\perp,$ let $Ker_{X}\Theta\subset\Theta$ be the submodule of all $\vartheta\in\Theta$ with $\mathcal{L}_{X}\vartheta\in\Theta.$
\end{definition}
\subsection*{Our first task.}
\emph{A given involutory filtration $\Omega_*$ will be adapted to ensure the property $Ker_X\bar\Omega_{l+1}=\bar\Omega_l$ to the maximal possible extent.} (See the Figure~2.
\begin{center}\begin{picture}(300,90)(0,-15)
\put(10,50){\line(1,0){10}}\put(20,50){\line(0,-1){50}}\put(10,55){$\Omega_0$}
\qbezier(10,35)(20,45)(34,35)\put(35,34){\vector(1,-1){1}}\qbezier(10,15)(20,30)(35,15)\put(36,14){\vector(1,-1){1}}
\put(40,50){\line(1,0){10}}\put(50,50){\line(0,-1){50}}\put(40,55){$\Omega_1$}
\qbezier(40,35)(50,45)(64,35)\put(65,34){\vector(1,-1){1}}\qbezier(40,15)(50,30)(63,30)\put(65,30){\vector(1,0){1}}
\put(70,50){\line(1,0){10}}\put(80,50){\line(0,-1){50}}\put(70,55){$\Omega_2$}
\qbezier(68,32)(80,45)(94,35)\put(95,34){\vector(1,-1){1}\put(-10,-32){$\mathcal L_X$}}
\qbezier(70,15)(80,20)(93,15)\put(95,13){\vector(1,-1){1}}
\put(10,-15){the original filtration}\put(95,55){$\ldots$}
\put(160,48){\line(1,0){10}}
\multiput(170,0)(0,5){10}{\put(0,0){\line(0,1){3}}}
\put(160,55){$\mathcal R^0$}
\qbezier(150,30)(165,50)(165,30)\qbezier(165,30)(165,10)(150,26)\put(150,26){\vector(-1,-2){1}}
\put(150,10){$\mathcal L_X$}
\put(190,50){\line(1,0){10}}\put(200,50){\line(0,-1){50}}\put(190,55){$\bar\Omega_0$}
\qbezier(190,35)(200,45)(214,35)\put(215,34){\vector(1,-1){1}}
\qbezier(220,35)(230,45)(244,35)\put(245,34){\vector(1,-1){1}}
\qbezier(250,35)(260,45)(274,35)\put(275,34){\vector(1,-1){1}}
\qbezier(250,15)(260,25)(274,15)\put(275,14){\vector(1,-1){1}}\put(263,3){$\mathcal L_X$}
\put(220,50){\line(1,0){10}}\put(230,50){\line(0,-1){50}}\put(220,55){$\bar\Omega_1$}
\put(250,50){\line(1,0){10}}\put(260,50){\line(0,-1){50}}\put(250,55){$\bar\Omega_2$}
\put(160,-15){the adapted filtration}\put(275,55){$\ldots$}
\put(115,-35){Figure 2}
\end{picture}\end{center}
\vspace{1 cm}
The last term $\mathcal R^0=Ker_X\bar\Omega_0$ will be independent of the choice of the original filtration $\Omega_*.$)
Let us turn to more detail.

Taking a not too special vector field $X\in\mathcal{H},$ we have $Ker_{X}\Omega_{l+1}=\Omega_{l}$ $(l\geq 0).$
(Use the injectivity $Z_1:\mathcal M_l\rightarrow\mathcal M_{l+1}$ $(l\geq 1)$ with $Z_1=X.$) This provides the {\it strongly descending chain} of submodules
\begin{equation}\label{ex5.1}
\cdots\supset\Omega_{1}\supset\Omega_{0}\supset Ker_{X}\Omega_{0}\supset\cdots
\supset (Ker_{X})^{K}\Omega_{0}\quad (K\geq 1)
\end{equation}
which necessarily terminates with the stationarity $(Ker_{X})^{K+1}\Omega_{0}=(Ker_{X})^{K}\Omega_{0}.$ The change of notation gives the {\it strongly ascending} filtration
\begin{equation}\label{ex5.2}
\bar\Omega_{*}:\bar\Omega_{0}\subset\bar\Omega_1\subset\cdots\subset\Omega=\cup \bar\Omega_l,
\end{equation}
$$\bar\Omega_{0}=(Ker_{X})^{K-1}\Omega_{0}, \bar\Omega_1=(Ker_{X})^{K-2}\Omega_0,\ldots ,\bar\Omega_{K-2}=Ker_{X}\Omega_0,$$ $$\bar\Omega_{K-1}=\Omega_0,\bar\Omega_{K-2}=\Omega_1,\ldots$$ of $\Omega.$
It has the obvious properties
\begin{equation}
Ker_{X}\bar\Omega_{l+1}=\bar\Omega_{l}\quad (l\geq 0),\quad
Ker_{X}\bar\Omega_{0}=(Ker_{X})^{2}\bar\Omega_{0}\neq \bar\Omega_{0}
\end{equation}
and moreover the less obvious properties \cite[VII 16]{T6} as follows.
\begin{prop}\label{Proposition5.1}
The filtration $(\ref{ex5.2})$ does not depend on the vector field $X.$
\end{prop}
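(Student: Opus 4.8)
The plan is to show that the strongly ascending filtration $\bar\Omega_*$ constructed in (\ref{ex5.2}) is intrinsic, i.e. independent of the not too special vector field $X$. First I would establish a characterization of the bottom term $\mathcal R^0=\bar\Omega_0=(Ker_X)^{K-1}\Omega_0$ that makes no reference to $X$. The natural candidate is the submodule of all forms $\omega\in\Omega_0$ (more precisely, $\omega\in\Omega$) that remain in the finite-dimensional space $\Omega_0$ under repeated Lie differentiation along the \emph{entire} module $\mathcal H$, not merely along one $X$; equivalently, the largest $\mathcal H$-invariant submodule contained in $\Omega_0$. In the language of Section~\ref{sec4}, this should correspond to the $\frak m$-torsion (the $\mathrm{Nil}$ or socle-type part) of the graded module $\mathcal M$ sitting in the lowest degrees, which is precisely the obstruction flagged by the presence of the maximal ideal $\frak m$ in $\text{Ass}\,\mathcal M$. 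The key claim is that $(Ker_X)^{K-1}\Omega_0$ equals this $X$-free object, so that $\mathcal R^0$ cannot depend on $X$.

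Next I would propagate the independence upward through the filtration. The defining property $Ker_X\bar\Omega_{l+1}=\bar\Omega_l$ for $l\geq 0$ is itself an $X$-dependent description, so I would instead give an intrinsic recursive characterization: having shown $\bar\Omega_0=\mathcal R^0$ is intrinsic, I would argue that $\bar\Omega_{l+1}$ is the preimage, under a suitable intrinsic operation, of $\bar\Omega_l$. The cleanest route is to express each $\bar\Omega_l$ as $\bar\Omega_0+\mathcal L_{\mathcal H}\bar\Omega_0+\cdots+\mathcal L_{\mathcal H}^{\,l}\bar\Omega_0$ intersected appropriately, using that for a not too special $X$ the single operator $\mathcal L_X$ reproduces the full $\mathcal L_{\mathcal H}$-action on the associated graded (this is exactly the injectivity of $Z_1=X$ on $\mathcal M_l$ guaranteed by quasiregularity). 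If the terms of the strongly ascending filtration are generated from $\mathcal R^0$ by applying $\mathcal L_{\mathcal H}$ and the successive Ker-operations only serve to recover the degrees below $\Omega_0$, then each $\bar\Omega_l$ is visibly $X$-free once $\mathcal R^0$ is.

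The main obstacle I expect is the bottom step: proving that two different not too special choices $X,X'$ yield the same stationary submodule $(Ker_X)^{K}\Omega_0=(Ker_{X'})^{K'}\Omega_0$, including that the length $K$ of descent agrees. The difficulty is that $Ker_X$ is genuinely $X$-dependent at each finite stage, and only the limit is claimed invariant. I would handle this by the associated-graded / commutative-algebra translation of Remark~\ref{r4.5}: an element $[\omega]$ survives all powers of $Ker_X$ precisely when it lies in the $\frak m$-primary (i.e. $\text{Ann}$-annihilated by some power of $\frak m$) part, and being annihilated by a power of $\frak m$ is equivalent to being annihilated by a power of $X$ for a not too special $X$, since such $X$ avoids every associated prime $\frak p\neq\frak m$ by (\ref{eq4.9})–(\ref{eq4.10}). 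Thus $u^r\mathcal M=0$ detecting $\mathrm{Nil}$-behaviour is insensitive to the particular $X$ chosen outside the bad linear subspaces, and the stationary kernel is forced to coincide with the intrinsic $\frak m$-torsion submodule. Once this identification with an $X$-free algebraic invariant is in hand, independence of the whole filtration follows, and I would close by invoking the reference \cite[VII 16]{T6} for the routine verification of the stated stabilization.
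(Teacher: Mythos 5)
Your strategy runs in the wrong direction, and both of its pillars fail on a concrete example. First, you conflate two different modules: the bottom term of (\ref{ex5.2}) is $\bar\Omega_0=(Ker_X)^{K-1}\Omega_0$, whereas $\mathcal R^0=Ker_X\bar\Omega_0=(Ker_X)^{K}\Omega_0$ is one kernel application deeper and is in general a \emph{proper} submodule of $\bar\Omega_0$, see (\ref{ex5.5}). Second, and fatally, the filtration (\ref{ex5.2}) cannot be rebuilt upward from any such intrinsic bottom object. Take the contact diffiety $\Omega(1)$ of Example~\ref{Ex1} with the involutive filtration whose term $\Omega_l$ is generated by $\omega^1_0,\ldots,\omega^1_{l+1}$. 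Then $Ker_D\Omega_0$ is generated by $\omega^1_0$ and $(Ker_D)^2\Omega_0=0$, so $K=2$, $\bar\Omega_0$ is generated by $\omega^1_0$, while $\mathcal R^0=0$ (consistently with Proposition~\ref{Proposition5.3}: no differentials lie in $\Omega(1)$). Your recipe --- characterize $\mathcal R^0$ without reference to $X$, then generate with $\mathcal L_{\mathcal H}$ --- produces only the zero module and can never recover $\bar\Omega_0$: the information defining the lower terms flows \emph{downward} from $\Omega_0$ through the kernels, not upward from $\mathcal R^0$. The same example refutes your key algebraic claim: $[\omega^1_0]\in\mathcal M_0$ is killed by $\mathfrak m$ itself (since $\mathcal L_D\omega^1_0=\omega^1_1\in\Omega_0$), so the $\mathfrak m$--torsion of $\mathcal M$ is nonzero, yet the stationary limit of the $Ker_X$--chain is $0$. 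The reason is that graded annihilation $X^r[\omega]=0$ only means $\mathcal L_X^r\omega\in\Omega_{r-1}$, whereas membership in $(Ker_X)^r\Omega_0$ demands the much stronger non--graded condition $\mathcal L_X^j\omega\in\Omega_0$ for all $j\leq r$; the degree--zero graded torsion corresponds to a \emph{single} kernel application $Ker_X\Omega_0$, not to the stationary limit.

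What the proposition actually requires --- and what your proposal never supplies --- is a mechanism for pushing $X$--independence \emph{down} through the finitely many exceptional degrees. For $l\geq K$ one has $Ker_X\bar\Omega_l=\bar\Omega_{l-1}=Ker_Z\bar\Omega_l$ for any two not too special fields $X,Z$, because there the terms $\bar\Omega_l$ coincide with terms of the given involutive filtration and quasiregularity applies. The paper's proof then proceeds by descending induction using the commutator: if some $\omega\in\bar\Omega_{K-2}=Ker_X\bar\Omega_{K-1}$ had $\mathcal L_Z\omega\notin\bar\Omega_{K-1}$, then $\mathcal L_Z\omega\in\bar\Omega_K\setminus\bar\Omega_{K-1}=\bar\Omega_K\setminus Ker_X\bar\Omega_K$ forces $\mathcal L_X\mathcal L_Z\omega\notin\bar\Omega_K$, while $\mathcal L_X\omega\in\bar\Omega_{K-1}$ gives $\mathcal L_Z\mathcal L_X\omega\in\bar\Omega_K$; hence $\mathcal L_{[X,Z]}\omega\notin\bar\Omega_K$, contradicting $[X,Z]\in\mathcal H$, $\omega\in\bar\Omega_{K-1}$ and goodness of the filtration. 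The identity $\mathcal L_X\mathcal L_Z-\mathcal L_Z\mathcal L_X=\mathcal L_{[X,Z]}$ together with $[\mathcal H,\mathcal H]\subset\mathcal H$ is the one idea this proof cannot do without, and it is absent from your proposal; deferring the verification to \cite[VII 16]{T6} defers the entire proposition rather than proving it.
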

\begin{proof}
Let $Z\in\mathcal{H}$ be another not too special vector field. Clearly
\begin{equation}\label{ex5.4}
Ker_{X}\bar\Omega_{l}=\bar\Omega_{l-1}=Ker_{Z}\bar\Omega_{l}\quad (l\geq K)
\end{equation}
hold true. We apply the descending induction to prove the equalities (\ref{ex5.4}) for all $l$ and start with the first step $l=K-1.$
Let us (on the contrary) assume that
$$Ker_{X}\bar\Omega_{K-1}=\bar\Omega_{K-2}\neq Ker_{Z}\bar\Omega_{K-1}$$ and that (for certainty) the inclusion $\bar\Omega_{K-2}\subset Ker_{Z}\bar\Omega_{K-1}$ is \emph{not true} so that there exists 
\[\omega\in \bar\Omega_{K-2},\quad \omega \not\in Ker_{Z}\bar\Omega_{K-1}.\]
\begin{center}\begin{picture}(250,125)(0,-30)
\put(10,10){\line(1,0){30}}\put(-10,15){$Ker_Z\bar\Omega_{K-1}$}
\put(-20,80){$Ker_X\bar\Omega_{K-1}=\bar\Omega_{K-2}$}
\qbezier[100](40,10)(60,40)(80,70)
\put(45,75){\line(1,0){20}}\put(65,75){\line(0,-1){80}}
\put(95,75){\line(1,0){20}}\put(115,75){\line(0,-1){80}}
\put(145,75){\line(1,0){20}}\put(165,75){\line(0,-1){80}}
\put(195,75){\line(1,0){20}}\put(215,75){\line(0,-1){80}}
\put(95,80){$\bar\Omega_{K-1}$}\put(145,80){$\bar\Omega_{K}$}\put(195,80){$\bar\Omega_{K+1}$}\put(245,80){$\dots$}
\put(56,17){\circle{10}}\put(53,15){$\omega$}
\qbezier(50,55)(70,65)(90,55)\put(91,54){\vector(1,-1){1}}\qbezier(100,55)(120,65)(140,55)\put(141,54){\vector(1,-1){1}}
\qbezier(150,55)(170,65)(190,55)\put(191,54){\vector(1,-1){1}}
\qbezier(150,55)(170,45)(190,50)\put(189,51){\vector(-1,1){1}}
\qbezier(200,55)(220,65)(240,55)\put(241,54){\vector(1,-1){1}}
\qbezier(200,55)(220,45)(240,50)\put(239,51){\vector(-1,1){1}}
\put(243,57){$\mathcal L_X$}\put(243,43){$\mathcal L_Z$}
\qbezier(59,21)(70,34)(90,24)\put(91,23){\vector(1,-1){1}}
\qbezier(100,25)(120,15)(140,25)\put(140,25){\vector(2,1){1}}\put(141,25){$\otimes$}
\qbezier(59,13)(100,0)(140,14)\put(140,14){\vector(2,1){1}}
\qbezier(143,15)(170,25)(190,15)\put(191,14){\vector(1,-1){1}}\put(192,7){$\otimes$}
\put(127,3){$\mathcal L_Z$}\put(177,21){$\mathcal L_X$}
\put(107,-25){Figure~3}
\end{picture}\end{center}
Then  $\mathcal{L}_{Z}\omega\in\bar\Omega_{K},$ $\mathcal{L}_{Z}\omega\not\in\bar\Omega_{K-1}.$
Therefore \[\mathcal{L}_{X}\mathcal{L}_{Z}\omega\in\bar\Omega_{K+1},  \mathcal{L}_{X}\mathcal{L}_{Z}\omega\not\in\bar\Omega_{K}.\] On the other hand $\mathcal{L}_{X}\omega\in\bar\Omega_{K-1}$ hence \[\mathcal{L}_{Z}\mathcal{L}_{X}\omega\in\bar\Omega_{K}.\] Altogether we conclude that
\[\mathcal{L}_{X}\mathcal{L}_{Z}\omega - \mathcal{L}_{Z}\mathcal{L}_{X}\omega =\mathcal{L}_{[X,Z]}\omega\not\in\bar\Omega_{K}\]
which is a~contradiction since (trivially) $\omega\in\bar\Omega_{K-1}$ and therefore $\mathcal L_{[X,Z]}\omega\in\bar\Omega_K$ by applying the induction assumption (\ref{ex5.4}).The following steps $l=K-2,$ $K-3,\ldots\,$ of the induction are analogous.
\end{proof}
\begin{prop}\label{Proposition5.2}
Filtration $(\ref{ex5.2})$ is good.
\end{prop}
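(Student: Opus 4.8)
The plan is to verify, for $\bar\Omega_*$, the three defining properties of a good filtration (Definition~\ref{def2.1}): each $\bar\Omega_l$ is finite-dimensional, $\mathcal{L}_\mathcal{H}\bar\Omega_l\subset\bar\Omega_{l+1}$ for all $l$, and $\bar\Omega_l+\mathcal{L}_\mathcal{H}\bar\Omega_l=\bar\Omega_{l+1}$ for $l$ large enough. The terms split naturally into the \emph{high-order} range $l\geq K-1$, where $\bar\Omega_l=\Omega_{l-K+1}$ merely reindexes the original involutive filtration, and the \emph{low-order} range $0\leq l\leq K-2$, where $\bar\Omega_l=(Ker_X)^{K-1-l}\Omega_0$ is the genuinely new part. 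All the real content lies in the second range.

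First I would dispatch the routine items. Finite-dimensionality is immediate: for $l\geq K-1$ the term $\bar\Omega_l=\Omega_{l-K+1}$ is finite-dimensional by hypothesis, while for $l<K-1$ the term $\bar\Omega_l\subset\Omega_0$ is a submodule of a finite-dimensional module. In the high-order range both the inclusion $\mathcal{L}_\mathcal{H}\bar\Omega_l\subset\bar\Omega_{l+1}$ and the surjectivity $\bar\Omega_l+\mathcal{L}_\mathcal{H}\bar\Omega_l=\bar\Omega_{l+1}$ transcribe, after the shift $l\mapsto l-K+1$, into the goodness and the involutivity of $\Omega_*$ (Definition~\ref{def4.2}), which hold for every $l\geq 0$. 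In particular the surjectivity holds for all $l\geq K-1$, hence for $l$ large enough, settling the third property once and for all.

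The heart of the argument is the inclusion $\mathcal{L}_\mathcal{H}\bar\Omega_l\subset\bar\Omega_{l+1}$ in the low-order range. Fix $\vartheta\in\bar\Omega_l$ with $0\leq l\leq K-2$, so that $\bar\Omega_l,\bar\Omega_{l+1}\subset\Omega_0\subset\Omega$. For the generating field $X$ the property $Ker_X\bar\Omega_{l+1}=\bar\Omega_l$ already gives $\mathcal{L}_X\vartheta\in\bar\Omega_{l+1}$; the task is to upgrade this from $X$ to an arbitrary $Z\in\mathcal{H}$. Two facts combine. On one hand, Proposition~\ref{Proposition5.1} shows $\bar\Omega_l=Ker_Z\bar\Omega_{l+1}$ for \emph{every} not too special $Z$, whence $\mathcal{L}_Z\vartheta\in\bar\Omega_{l+1}$ for all such $Z$. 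On the other hand, since $\vartheta\in\Omega$ and $Z\in\mathcal{H}=\Omega^\perp$ we have $Z\rfloor\vartheta=\vartheta(Z)=0$, so the Cartan formula yields $\mathcal{L}_{fZ}\vartheta=f\,Z\rfloor\mathrm{d}\vartheta=f\,\mathcal{L}_Z\vartheta$; that is, the assignment $Z\mapsto[\mathcal{L}_Z\vartheta]\in\Omega/\bar\Omega_{l+1}$ is $\mathcal{F}$-linear. Because the not too special fields are precisely those avoiding a finite family of proper linear subspaces of $\mathcal{H}$ (Remark~\ref{r4.4}), one may choose among them a basis $Z_1,\ldots,Z_n$ of $\mathcal{H}$; the $\mathcal{F}$-linear map above vanishes on this basis, hence vanishes identically, giving $\mathcal{L}_Z\vartheta\in\bar\Omega_{l+1}$ for every $Z\in\mathcal{H}$.

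I expect this last bootstrap to be the only delicate point. The difficulty is precisely that the construction of $\bar\Omega_*$ controls only the single derivative $\mathcal{L}_X$, whereas goodness demands control of the whole $\mathcal{L}_\mathcal{H}$; the passage is legitimate only because $Z\mapsto\mathcal{L}_Z\vartheta$ is tensorial (pointwise $\mathcal{F}$-linear) on forms of $\Omega$, so its vanishing on a generic—hence spanning—family of directions forces vanishing on all of $\mathcal{H}$. Once this is secured, assembling the high- and low-order verifications completes the proof that $\bar\Omega_*$ is good.
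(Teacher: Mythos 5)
Your proof is correct and follows essentially the same route as the paper: the paper's one-line argument also rests on Proposition~\ref{Proposition5.1}, deducing $\mathcal L_Z\bar\Omega_l\subset\bar\Omega_{l+1}$ for every not too special $Z$ and hence $\mathcal L_{\mathcal H}\bar\Omega_l\subset\bar\Omega_{l+1}$. You merely make explicit what the paper calls trivial, namely the $\mathcal F$-linearity of $Z\mapsto\mathcal L_Z\vartheta$ on $\mathcal H$ (valid since $\vartheta(Z)=0$) and the fact that the not too special fields contain a basis, which is precisely the bootstrap needed to pass from generic $Z$ to all of $\mathcal H$.
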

\begin{proof} This assertion is trivial since $\mathcal L_Z\bar\Omega_l\subset\bar\Omega_{l+1}$ hence $\mathcal L_{\mathcal H}\bar\Omega_l\subset\Omega_{l+1}$
due to Proposition~\ref{Proposition5.1}.
\end{proof}
\begin{prop}\label{Proposition5.3}
The module
\begin{equation}\label{ex5.5}\mathcal R^0=Ker_X\bar\Omega_0=(Ker_X)^K\Omega_0=\cap(Ker_\mathcal H)^k\Omega_0\end{equation}
is generated by all differentials \emph{$\mbox{d}f\in\Phi$} lying moreover in $\Omega.$
\end{prop}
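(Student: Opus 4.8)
The plan is to prove the two inclusions $\mathcal R\subset\mathcal R^0$ and $\mathcal R^0\subset\mathcal R$, where $\mathcal R\subset\Phi$ denotes the $\mathcal F$--submodule generated by all exact forms $\mathrm{d}f$ lying in $\Omega.$ First I record the structural facts I shall lean on. Since $\Omega_*$ is involutive, a not too special $X\in\mathcal H$ gives injective multiplications $X\colon\mathcal M_l\to\mathcal M_{l+1}$ for $l\geq 1,$ equivalently $Ker_X\Omega_{l+1}=\Omega_l$ for $l\geq 0,$ and by Proposition~\ref{Proposition5.1} the module $\mathcal R^0=(Ker_X)^K\Omega_0=\{\vartheta\in\Omega_0:\mathcal L_X^j\vartheta\in\Omega_0\ (j\geq1)\}$ is the same for every such $X.$ Because $X\rfloor\omega=0$ for $\omega\in\Omega$ one has $\mathcal L_{gX}\omega=g\,\mathcal L_X\omega,$ so $\mathcal L$ is $\mathcal F$--linear in the field along $\Omega;$ combined with the $X$--independence this shows $\mathcal R^0$ is an $\mathcal F$--submodule with $\mathcal L_{\mathcal H}\mathcal R^0\subset\mathcal R^0,$ and it is finite--dimensional since $\mathcal R^0\subset\Omega_0.$ The iteration $\mathcal L_X\mathcal S\subset\mathcal S\subset\Omega_0\Rightarrow\mathcal S\subset(Ker_X)^k\Omega_0$ for all $k$ identifies $\mathcal R^0$ as the largest $\mathcal L_{\mathcal H}$--invariant submodule contained in $\Omega_0.$

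For the inclusion $\mathcal R\subset\mathcal R^0$ the key identity is $\mathcal L_{D_i}\omega=D_i\rfloor\mathrm{d}\omega$ for $\omega\in\Omega$ (as $\omega(D_i)=0$), which upon expanding $\mathrm{d}\omega$ in a coframe $\{\mathrm{d}x_i\}\cup\{\theta_a\}$ with $\{\theta_a\}$ a basis of $\Omega$ gives $\mathrm{d}\omega\equiv\sum_i\mathrm{d}x_i\wedge\mathcal L_{D_i}\omega\ (\mathrm{mod}\ \Omega\wedge\Omega),$ there being no $\mathrm{d}x_i\wedge\mathrm{d}x_j$ part because $[\mathcal H,\mathcal H]\subset\mathcal H.$ Applying this to a closed form $\omega=\mathrm{d}f\in\Omega$ forces $\sum_i\mathrm{d}x_i\wedge\mathrm{d}(D_if)\in\Omega\wedge\Omega$ with each $\mathrm{d}(D_if)\in\Omega;$ since the families $\mathrm{d}x_i\wedge\theta_a$ and $\theta_a\wedge\theta_b$ are linearly independent, this yields $\mathrm{d}(D_if)=\mathcal L_{D_i}\mathrm{d}f=0,$ i.e. $\mathcal L_{\mathcal H}\mathrm{d}f=0.$ Hence the class of $\mathrm{d}f$ in its minimal filtration degree $l$ is killed by $X;$ by injectivity for $l\geq1$ this is impossible unless $l=0,$ so $\mathrm{d}f\in\Omega_0.$ Being $\mathcal L_{\mathcal H}$--killed and in $\Omega_0,$ each $\mathrm{d}f$ satisfies $\mathcal L_X^j\mathrm{d}f=0\in\Omega_0$ and so lies in $\mathcal R^0;$ as $\mathcal R^0$ is a submodule, $\mathcal R\subset\mathcal R^0.$

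For the reverse inclusion I would produce an exact $\mathcal F$--basis of $\mathcal R^0.$ The operators $\mathcal L_X$ $(X\in\mathcal H)$ restrict to $\mathbb R$--linear maps of the finite--dimensional module $\mathcal R^0$ satisfying the Leibniz rule, and the Lie--derivative identity $[\mathcal L_X,\mathcal L_Y]=\mathcal L_{[X,Y]}$ with $[X,Y]\in\mathcal H$ shows that the connection $X\mapsto\mathcal L_X$ on $\mathcal R^0$ has vanishing curvature along the involutive distribution $\mathcal H.$ Hence $\mathcal R^0$ admits a local $\mathcal F$--basis of horizontal sections $\tilde\vartheta_1,\dots,\tilde\vartheta_K$ with $\mathcal L_X\tilde\vartheta_a=0$ for all $X\in\mathcal H.$ For such a form $X\rfloor\tilde\vartheta_a=X\rfloor\mathrm{d}\tilde\vartheta_a=0,$ so $\mathrm{d}\tilde\vartheta_a\in\Omega\wedge\Omega;$ it then remains to establish the Frobenius condition $\mathrm{d}\mathcal R^0\subset\mathcal R^0\wedge\Phi,$ after which the Frobenius theorem replaces the $\tilde\vartheta_a$ by differentials $\mathrm{d}t^a$ lying in $\mathcal R^0\subset\Omega$ and generating it, giving $\mathcal R^0\subset\mathcal R$ and hence equality.

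The hard part is exactly this last integrability: showing that the symbol term $\mathrm{d}\tilde\vartheta_a\in\Omega\wedge\Omega$ in fact lies in $\mathcal R^0\wedge\Omega$ (equivalently, that $(\mathcal R^0)^\perp$ is involutive as a distribution). The $\mathrm{d}x_i$--part of $\mathrm{d}\tilde\vartheta_a$ already vanishes because the sections are horizontal, so the entire obstruction is the single $\mathcal H$--invariant two--form $\mathrm{d}\tilde\vartheta_a,$ and controlling it is where the involutivity of the filtration must be used in earnest; this is the higher--variable counterpart of the structure equation $\mathrm{d}\tau^k\sim0\ (\mathrm{mod}\ \tau^1,\dots,\tau^K)$ of the one--variable case in~(\ref{eq3.2}). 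I expect to settle it by feeding the quasiregular, vanishing--torsion symbol of $\Omega_*$ into the identity for $\mathrm{d}$ above and checking that the $\Omega\wedge\Omega$ component cannot leave $\mathcal R^0\wedge\Omega;$ this is the only genuinely delicate step, the remainder being the bookkeeping sketched above.
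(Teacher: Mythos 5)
Your inclusion $\mathcal R\subset\mathcal R^0$ is correct and is in substance the paper's own argument, only more laborious than necessary: since $\mathrm{d}f\in\Omega$ gives $D_if=\mathrm{d}f(D_i)=0$, one has $\mathcal L_{D_i}\mathrm{d}f=\mathrm{d}(D_if)=0$ immediately, with no need for the coframe expansion of $\mathrm{d}\omega$; the paper then simply lets $\mathrm{d}f$ descend through the adapted filtration ($\mathrm{d}f\in\bar\Omega_l$ implies $\mathrm{d}f\in Ker_X\bar\Omega_l=\bar\Omega_{l-1}$, hence eventually $\mathrm{d}f\in Ker_X\bar\Omega_0=\mathcal R^0$), which is the same descent you carry out using quasiregularity of $\Omega_*$.

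The genuine gap is in the reverse inclusion, and you have located it yourself: everything hinges on the Frobenius condition $\mathrm{d}\mathcal R^0\cong 0$ $(\text{mod }\mathcal R^0)$, which you only announce you ``expect to settle'' by feeding in the symbol of $\Omega_*$. That is precisely the step the paper declares \emph{not immediate} and resolves by citation to \cite[p.~177]{T6}; it is the entire mathematical content of this direction, since once it is granted the Frobenius theorem directly produces a basis of $\mathcal R^0$ consisting of differentials $\mathrm{d}t^a\in\mathcal R^0\subset\Omega$. Your intermediate construction of a horizontal frame is therefore superfluous --- and, worse, it is itself unjustified in this setting: producing sections $\tilde\vartheta_a$ with $\mathcal L_X\tilde\vartheta_a=0$ for all $X\in\mathcal H$ amounts to solving a compatible system of total--derivative equations $D_ig_a^b+\sum g_a^cc_{ic}^b=0$ on the infinite--dimensional space $\mathbf M$, and flatness of your ``partial connection'' does not license the classical parallel--transport argument here, because vector fields of $\mathcal H$ need not generate one--parameter groups (as the paper stresses in Section~\ref{sec2}); solvability of such a system is of essentially the same depth as the Frobenius condition you are trying to establish. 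So your proposal reproduces the paper's easy half and correctly isolates the crux of the hard half, but does not close it: to complete the proof you must either reproduce the argument of \cite[p.~177]{T6} or give an independent proof that $\mathcal L_{\mathcal H}\mathcal R^0\subset\mathcal R^0$ forces $\mathrm{d}\mathcal R^0\cong 0$ $(\text{mod }\mathcal R^0)$.
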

\begin{proof}
The inclusion $\mathcal{L}_{Z}\mathcal{R}^{0}\subset\mathcal{R}^{0}$ implies the Frobenius condition $\mbox{d}\mathcal R^0\cong 0$ (mod $\mathcal R^0)$ but the proof is not immediate and we refer to \cite[p. 177]{T6}. Therefore the module $\mathcal R_0$ is generated by certain total differentials $\mbox{d}f$ $(f\in\mathcal F)$ where $\mbox{d}f\in\mathcal R_0\subset\Omega.$ On the other hand every differential $\mbox{d}f\in\Omega$ is lying in certain module $\bar\Omega_l$ and then (trivially) also in $Ker_X\bar\Omega_l=\bar\Omega_{l-1}$ hence in $\mathcal R_0.$
\end{proof}
\begin{prop}\label{Proposition5.4} Module $\mathcal R^0=\mathcal R^0(\Omega)$ does not depend on the choice of the filtration $\Omega_*.$
\end{prop}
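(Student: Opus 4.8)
The plan is to reduce everything to the intrinsic description of $\mathcal R^0$ already obtained in Proposition~\ref{Proposition5.3}. That proposition asserts that $\mathcal R^0$ is precisely the submodule of $\Omega$ generated by all total differentials $\mathrm df$ ($f\in\mathcal F$) that happen to lie in $\Omega$. The decisive observation is that this generating family --- the set of exact forms contained in the diffiety $\Omega$ --- is formulated entirely in terms of $\Omega$ and the ambient module $\Phi(\mathbf M)$, with no reference whatsoever to the auxiliary filtration $\Omega_*$, to its initial term $\Omega_0$, to the stabilisation index $K$, or to the vector field $X$ used in the construction (\ref{ex5.1})--(\ref{ex5.5}). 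I would therefore let $\mathcal R(\Omega)\subset\Omega$ denote this submodule generated by the exact forms lying in $\Omega$, exactly as in the controllability discussion of Section~\ref{sec3}, and record that $\mathcal R(\Omega)$ is manifestly an invariant of $\Omega$ alone.

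First I would fix two good filtrations $\Omega_*$ and $\Omega'_*$ of one and the same diffiety $\Omega$, together with (possibly different) not too special vector fields entering the two runs of the construction, and denote the resulting modules by $\mathcal R^0$ and ${\mathcal R'}^0$. Applying Proposition~\ref{Proposition5.3} separately to each gives $\mathcal R^0=\mathcal R(\Omega)={\mathcal R'}^0$, whence $\mathcal R^0={\mathcal R'}^0$ at once. I would note in passing that the independence on the choice of $X$ \emph{within} a single filtration is already covered by Proposition~\ref{Proposition5.1} --- the entire adapted filtration $\bar\Omega_*$, and hence its bottom kernel $Ker_X\bar\Omega_0$, is $X$-independent --- so that it is really Proposition~\ref{Proposition5.3} that carries the comparison \emph{across} distinct filtrations.

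The hard part has in fact already been discharged: all the substance resides in the Frobenius-type argument behind Proposition~\ref{Proposition5.3}, namely that $\mathcal L_Z\mathcal R^0\subset\mathcal R^0$ forces the Frobenius condition $\mathrm d\mathcal R^0\cong 0$ (mod $\mathcal R^0$) and hence that $\mathcal R^0$ is spanned by total differentials, together with the converse remark that every exact form lying in $\Omega$ is automatically captured by the stabilised kernel $(Ker_X)^K\Omega_0$ of (\ref{ex5.5}). Granting those two facts, Proposition~\ref{Proposition5.4} is a purely formal corollary and no further calculation is required. The only thing to guard against is a circular reading in which the ``intrinsic'' characterisation is secretly filtration-dependent; this is excluded precisely because the exact forms of $\Omega$ are defined before any filtration $\Omega_*$ is introduced.
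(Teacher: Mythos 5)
Your proposal is correct and takes essentially the same route as the paper, whose own proof opens by declaring the statement ``a trivial consequence of previous Proposition~\ref{Proposition5.3}'' --- exactly your reduction to the filtration-free characterisation of $\mathcal R^0$ as the module generated by the exact forms lying in $\Omega$. The paper merely adds an optional direct argument you do not need: a dimension-growth criterion distinguishing $\omega\in\mathcal R^0$ (where $\dim\{\omega,\mathcal L_{\mathcal H}\omega,\ldots,(\mathcal L_{\mathcal H})^k\omega\}$ stays bounded) from $\omega\notin\mathcal R^0$ (where it grows at least like $k+1$), which gives an intrinsic membership test without invoking Proposition~\ref{Proposition5.3}.
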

\begin{proof} This is a~trivial consequence of previous Proposition~\ref{Proposition5.3}, however, direct proof may be as follows:
\begin{equation}\label{eq5.6}
\begin{array}{lll}
\mbox{dim}\,\{\omega,\mathcal{L}_{\mathcal{H}}\omega,\ldots,
(\mathcal{L}_{\mathcal{H}})^{k}\omega\}\leq const. &\mbox{if}&\omega\in\mathcal{R}^{0},\\ \\
\mbox{dim}\,\{\omega,\mathcal{L}_{\mathcal{H}}\omega,\ldots,(\mathcal{L}_{\mathcal{H}})^{k}
\omega\}\geq k+1& \mbox{if}&\omega\in\Omega, \omega\not\in\mathcal{R}^{0},
\end{array}\end{equation} see also Figure~2.
\end{proof}
\subsection*{Our second task}
Assuming the previous result
\[Ker_X\Omega_{l+1}=\Omega_l\ (l\geq 1), Ker_X\Omega_0=\mathcal R^0, Ker_X\mathcal R^0=\mathcal R^0,\]
the construction will be repeated "modulo $X$" by employing another not too special vector field $Y\in\mathcal H.$ In more detail, we introduce the filtration
\begin{equation}\label{eq5.7}
\Omega(X)_*: \Omega(X)_0\subset\Omega(X)_1\subset\cdots\subset\Omega=\cup\Omega(X)_l\quad (\Omega(X)_l=\sum(\mathcal L_X)^k\Omega_l)\end{equation}
and \emph{our aim is to ensure $Ker_Y\Omega(X)_{l+1}=\Omega(X)_l$ in the maximal possible extent by the adaptation of the lower--order terms of filtration} (\ref{eq5.7}). Let us turn to more detail.

Taking a~not too special vector field $Y\in\mathcal H,$ we have $Ker_Y\Omega(X)_{l+1}=\Omega(X)_l$ for $l$ large enough and even for $l\geq 0$ by using a~lift. (Let us recall the injectivity $Z_2:\mathcal M(1)_l\rightarrow\mathcal M(1)_{l+1}$ $(l\geq 1)$ with $Z_2=Y.$) This provides the \emph{strongly descending} chain
\begin{equation}\label{eq5.8}
\cdots\supset\Omega(X)_{1}\supset\Omega(X)_{0}\supset Ker_{Y}\Omega(X)_{0}\supset\cdots
\supset (Ker_{Y})^{K}\Omega(X)_{0}\ \ (K\geq 1)
\end{equation}
which {\it does terminate} with the stationarity $(Ker_{Y})^{K+1}\Omega(X)_{0}=(Ker_{Y})^{K}\Omega(X)_{0},$ see  Proposition~\ref{Proposition5.5} below.
So we obtain the {\it strongly ascending} filtration
\begin{equation}\label{eq5.9}\begin{array}{c}\bar\Omega(X)_{*}:\bar\Omega(X)_0\subset\bar\Omega(X)_1\subset\cdots\subset\Omega=\cup \bar\Omega(X)_l,\\ \\
\bar\Omega(X)_0=(Ker_{Y})^{K-1}\Omega(X)_0,\ldots,\bar\Omega(X)_{K-2}=Ker_{Y}\Omega(X)_0,\\ \\
\bar\Omega(X)_{K-1}=\Omega(X)_0,\bar\Omega(X)_{K-2}=\Omega(X)_1,
\end{array}\end{equation}
of $\Omega$  with the infinite--dimensional terms. It has the obvious properties 
\begin{equation}\label{eq5.10}\begin{array}{c}Ker_{Y}\bar\Omega(X)_{l+1}= \bar\Omega(X)_{l}\quad (l\geq 0),\\ \\ Ker_{Y}\Omega(X)_{0}=(Ker_{Y})^{2}\Omega(X)_{0}\neq \bar\Omega_{0}\end{array}\end{equation}
and moreover the less obvious properties as follows.
\begin{prop}\label{Proposition5.5} The stationarity holds true. \end{prop}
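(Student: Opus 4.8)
The plan is to deduce the stationarity of the infinite descending chain $\Theta_k=(Ker_Y)^k\Omega(X)_0$ of (\ref{eq5.8}) from the \emph{trivial} stationarity of a companion chain living in the \emph{finite}--dimensional space $\mathcal M_0=\Omega_0$. The natural arena is the associated graded module $\mathcal M=\text{Grad}\,\Omega_*$, where the two not too special fields induce \emph{commuting} degree--raising multiplications $X,Y:\mathcal M_l\to\mathcal M_{l+1}$ (recall that $[X,Y]\in\mathcal H$ forces $XY=YX$ on $\mathcal M$), and where, after a lift, $X$ acts injectively in every degree. First I would record the identification $\text{gr}\,\Omega(X)_0=\bigoplus_m X^m\mathcal M_0$: an element of $\Omega(X)_0=\sum_k(\mathcal L_X)^k\Omega_0$ of filtration degree $m$ has leading term $X^m[\omega_m]$ with $\omega_m\in\Omega_0$, and $X$--injectivity makes $[\omega_m]\mapsto X^m[\omega_m]$ an isomorphism of $\mathcal M_0$ onto $\text{gr}_m\Omega(X)_0$; in particular each graded piece is finite--dimensional.

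Next I would track the $Ker_Y$--operation through $\text{gr}$. Define a descending chain $V_0=\mathcal M_0\supseteq V_1\supseteq\cdots$ of subspaces of $\mathcal M_0$ by $V_k=\{q\in V_{k-1}:Yq\in XV_{k-1}\}$. The inclusion $\text{gr}_m\Theta_k\subseteq X^mV_k$ follows by induction on $k$: if $\omega\in\Theta_k$ has leading term $X^mq$ with $q\in V_{k-1}$, then $\mathcal L_Y\omega\in\Theta_{k-1}$ has leading term $X^mYq$, which lies in $\text{gr}_{m+1}\Theta_{k-1}\subseteq X^{m+1}V_{k-1}$, whence $Yq\in XV_{k-1}$ by $X$--injectivity, i.e. $q\in V_k$. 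Since $V_k$ is a descending chain in the finite--dimensional $\mathcal M_0$, it stabilises: $V_K=V_{K+1}=\cdots$ for some $K$.

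It remains to establish the reverse inclusion $X^mV_k\subseteq\text{gr}_m\Theta_k$, for then $\text{gr}\,\Theta_K=\text{gr}\,\Theta_{K+1}$, and the desired stationarity $\Theta_K=\Theta_{K+1}$ follows by the standard transfer for filtered modules: given $\omega\in\Theta_K$ of degree $m$ one subtracts an element of $\Theta_{K+1}$ with the same leading term and descends on $m$, the induction terminating because the filtration is bounded below by degree $0$. I expect this reverse inclusion to be the main obstacle, since lifting a class $q\in V_k$ to an honest $\omega\in\Theta_k$ is not a purely graded matter: the filtered commutator $\mathcal L_{[X,Y]}$, invisible in $\mathcal M$, contributes lower--order terms that may leave $\Omega(X)_0$. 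The plan here is a successive--approximation argument — start from a lift in $\Theta_{k-1}$ furnished by the inductive hypothesis and correct it by successively lower--order terms so that its $Y$--derivative is forced into $\Omega(X)_0$ degree by degree, termination again being guaranteed by boundedness of the filtration below. This is precisely the delicate point settled in \cite[VII 16]{T6}, and it is where both the injectivity of $X$ and the exact commutativity of $X$ and $Y$ on $\mathcal M$ are indispensable.
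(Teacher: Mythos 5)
Your graded--module reduction is sound as far as it goes: the commutativity $XY=YX$ on $\mathcal M$, the identification $\mathrm{gr}_m\,\Omega(X)_0=X^m\mathcal M_0$ (valid only modulo the obstacle $\mathcal R^0=Ker_X\Omega_0$, which must vanish for $X$ to act injectively in degree $0$), the forward inclusion $\mathrm{gr}_m\,\Theta_k\subseteq X^mV_k$ for the chain $\Theta_k=(Ker_Y)^k\Omega(X)_0$, and the stabilisation of $V_k$ inside the finite--dimensional $\mathcal M_0$ are all correct. But the argument does not close, and you say so yourself. Without the reverse inclusion $X^mV_k\subseteq\mathrm{gr}_m\,\Theta_k$, stabilisation of $V_k$ gives no control whatever over the chain $\Theta_k$: nothing you have proved excludes, say, the pattern $\mathrm{gr}_m\,\Theta_k=X^mV_K$ for $m\geq k$ and $\mathrm{gr}_m\,\Theta_k=0$ for $m<k$, which is strictly descending forever yet fully compatible with your forward inclusion and with stationary $V_k$. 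Moreover the reverse inclusion is genuinely problematic, for the reason you yourself identify: lifting $q\in V_k$ to an actual form $\omega\in\Theta_k$ forces one to control terms $\mathcal L_{[X,Y]}(\cdot)$, and $\Omega(X)_0$ is \emph{not} stable under $\mathcal L_Z$ for a general $Z\in\mathcal H$, so your successive--approximation scheme carries no termination guarantee. Announcing it as a ``plan'' and deferring to \cite{T6} (which the paper invokes for the first--task statements around Proposition \ref{Proposition5.1}, not for this step) leaves the decisive point unproved; the proposal is therefore incomplete exactly where the difficulty lies.

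The paper's own proof sidesteps the lifting problem entirely. To each term of the chain (\ref{eq5.8}) it attaches the graded module $\mathcal M[k]$ obtained by intersecting with the \emph{finite--dimensional} filtration (\ref{ex5.2}); the graded dimension $\dim\mathcal M[k]_l$ is eventually a constant $e_0(k)$ in $l$ (a zeroth--order Hilbert polynomial --- in substance your own observation that $\mathrm{gr}\,\Omega(X)_0$ has eventually constant graded dimensions), and a proper inclusion in the chain forces a strict change of these non--negative integers, which can happen only finitely often. The essential difference is that this argument uses only a numerical invariant of each $\Theta_k$ and never needs to realise a graded class by an actual form lying in $\Theta_k$ --- precisely the step your approach cannot supply. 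If you wish to salvage your line of reasoning, replace the missing reverse inclusion by such a dimension count.
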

\begin{proof} The Hilbert polynomials provides the desired result. Denote by \[\mathcal{M}[k]=\mathcal{M}[k]_{0}\oplus\mathcal{M}[k]_{1}\oplus\cdots,\quad \mathcal{M}[k]_{l}=\bar\Omega(X)_{k}\cap\bar\Omega_{l}/\bar\Omega(X)_{k}\cap\bar\Omega_{l-1}\]
the graded $\mathcal{F}$--module equipped moreover with $X$--multiplication.Then $\mbox{dim}\,\mathcal{M}[k]_{l}$ $=e_{0}(k)$ is the zeroth--order polynomial in $l$ (a~constant depending on $k$) and obviously $e_{0}(k+1)>e_{0}(k)$ at the places of proper inclusions in (\ref{eq5.8}). This causes the finite length.\end{proof}
\begin{prop}\label{Proposition5.6} Filtration $(\ref{eq5.9})$ does not depend on the vector field $Y.$
\end{prop}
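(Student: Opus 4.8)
The plan is to imitate the proof of Proposition~\ref{Proposition5.1} almost verbatim, with every object shifted one level: the good filtration $\Omega_*$ is replaced by the filtration $\Omega(X)_*$ of~(\ref{eq5.7}), the primary field $X$ is replaced by $Y$, and the auxiliary field plays the rôle of a second not too special field $W\in\mathcal H$. We keep $X$ fixed throughout, so the assertion is that the ascending filtration $\bar\Omega(X)_*$ of~(\ref{eq5.9})---built from the common upper terms $\Omega(X)_l$ by iterating $Ker$ downwards---is the same whether the downward iteration uses $Y$ or $W$. Writing $K$ for the length furnished by Proposition~\ref{Proposition5.5}, I would establish
\begin{equation*}
Ker_Y\bar\Omega(X)_l=\bar\Omega(X)_{l-1}=Ker_W\bar\Omega(X)_l\qquad(\text{all }l)
\end{equation*}
by descending induction on $l$; once these kernel equalities hold, each term of $\bar\Omega(X)_*$ is determined by $Ker_Y$ (equivalently $Ker_W$) applied to the common upper terms, so the two filtrations coincide.

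First I would record the two ingredients available before the induction. On the one hand, $\Omega(X)_*$ is itself good: since $[Z,X]\in\mathcal H$ for every $Z\in\mathcal H$ and $\Omega_*$ is good, one commutes $\mathcal L_Z$ past the powers $(\mathcal L_X)^k$ in $\Omega(X)_l=\sum(\mathcal L_X)^k\Omega_l$, each commutator raising the $\Omega$--index by one, whence $\mathcal L_{\mathcal H}\Omega(X)_l\subset\Omega(X)_{l+1}$. On the other hand, for $l\geq K$ one has $\bar\Omega(X)_l=\Omega(X)_{l-K+1}$, and there the relation $Ker_Y\Omega(X)_{l'+1}=\Omega(X)_{l'}$ already derived in the construction from the injectivity of $Z_2:\mathcal M(1)_{l'}\to\mathcal M(1)_{l'+1}$ holds for both not too special fields $Y$ and $W$. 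This starts the induction on the upper region.

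The inductive step is the contradiction argument illustrated by Figure~3. Assuming the displayed equalities for all indices exceeding $l$ and supposing, say, $\bar\Omega(X)_{l-1}=Ker_Y\bar\Omega(X)_l\neq Ker_W\bar\Omega(X)_l$, I would pick $\omega\in\bar\Omega(X)_{l-1}$ with $\omega\notin Ker_W\bar\Omega(X)_l$. Then $\mathcal L_W\omega\in\bar\Omega(X)_{l+1}\setminus\bar\Omega(X)_l$, whence $\mathcal L_Y\mathcal L_W\omega\in\bar\Omega(X)_{l+2}\setminus\bar\Omega(X)_{l+1}$ by the induction hypothesis applied to $Ker_Y$, while $\mathcal L_Y\omega\in\bar\Omega(X)_l$ forces $\mathcal L_W\mathcal L_Y\omega\in\bar\Omega(X)_{l+1}$. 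Subtracting gives $\mathcal L_{[Y,W]}\omega=\mathcal L_Y\mathcal L_W\omega-\mathcal L_W\mathcal L_Y\omega\notin\bar\Omega(X)_{l+1}$, and the symmetric sub-case (the reverse inclusion failing) is identical.

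The step I expect to be the genuine obstacle is exactly the closing commutator inclusion $\mathcal L_{[Y,W]}\bar\Omega(X)_l\subset\bar\Omega(X)_{l+1}$, which contradicts the display above. Its difficulty is twofold: the terms $\bar\Omega(X)_l$ are now \emph{infinite--dimensional}, and the good--filtration property of $\bar\Omega(X)_*$ is not yet available as an independent fact, so it must be bootstrapped through the very same descending induction---just as goodness of $\bar\Omega_*$ was obtained only \emph{a posteriori} in Proposition~\ref{Proposition5.2} from Proposition~\ref{Proposition5.1}. In the base region this inclusion is precisely the goodness of $\Omega(X)_*$ recorded above together with $[Y,W]\in\mathcal H$, while in the remaining steps it follows from the kernel equalities already established at higher indices; verifying that these two sources combine to cover every level, without a one--level slippage in the index, is the heart of the matter.
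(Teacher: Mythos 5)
Your proposal is correct and coincides with the paper's own proof, which simply states that ``the method of Proposition~\ref{Proposition5.1} applies without much change,'' with $\bar\Omega(X)_l$ replacing $\bar\Omega_l$ and $Y$ undertaking the role of $X$ --- exactly the substitution scheme, descending induction, and Figure~3 contradiction argument you describe. The one ingredient you spell out that the paper leaves tacit, namely the goodness $\mathcal L_{\mathcal H}\Omega(X)_l\subset\Omega(X)_{l+1}$ obtained by commuting $\mathcal L_Z$ past the powers $(\mathcal L_X)^k$ (needed to start the bootstrap at the base level, just as goodness of $\Omega_*$ was needed in Proposition~\ref{Proposition5.1}), is a genuine and correctly handled addition rather than a deviation.
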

\begin{proof} The method of Proposition~\ref{Proposition5.1} applies without much change. Modules $\bar\Omega(X)_l$ stand for the previous $\bar\Omega_l$ and the new vector field $Y$ undertakes the role of $X.$ \end{proof}
\begin{prop}\label{Proposition5.7}  Filtration $(\ref{eq5.9})$ is good. \end{prop}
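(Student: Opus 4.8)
The plan is to verify the two conditions (\ref{eq2.3}) of Definition~\ref{def2.1} for the filtration (\ref{eq5.9}) --- whose terms are now infinite--dimensional, so that ``good'' is read as the Lie--derivative requirements only --- namely the inclusion $\mathcal L_{\mathcal H}\bar\Omega(X)_l\subset\bar\Omega(X)_{l+1}$ for all $l$ together with the generation $\bar\Omega(X)_l+\mathcal L_{\mathcal H}\bar\Omega(X)_l=\bar\Omega(X)_{l+1}$ for $l$ large. Mirroring the pattern of Proposition~\ref{Proposition5.2}, I would treat the two kinds of terms of (\ref{eq5.9}) separately: the upper terms $\bar\Omega(X)_{K-1+m}=\Omega(X)_m$ coming from (\ref{eq5.7}), and the lower terms $\bar\Omega(X)_j=(Ker_Y)^{K-1-j}\Omega(X)_0$ produced by the descending $Ker_Y$--procedure.

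For the upper terms the key claim is $\mathcal L_Z\Omega(X)_m\subset\Omega(X)_{m+1}$ for every $Z\in\mathcal H$. Since $\Omega(X)_m=\sum_k(\mathcal L_X)^k\Omega_m$ by (\ref{eq5.7}), it is enough to control $\mathcal L_Z(\mathcal L_X)^k\Omega_m$, which I would do by induction on $k$ with the statement quantified over all $Z\in\mathcal H$. The base case is the original good--filtration property $\mathcal L_Z\Omega_m\subset\Omega_{m+1}\subset\Omega(X)_{m+1}$. For the step I commute $\mathcal L_Z$ past one factor of $\mathcal L_X$ by $\mathcal L_Z\mathcal L_X=\mathcal L_X\mathcal L_Z+\mathcal L_{[Z,X]}$: the first term lands in $\mathcal L_X\Omega(X)_{m+1}\subset\Omega(X)_{m+1}$ because $\Omega(X)_{m+1}$ is $\mathcal L_X$--invariant by construction, and the second lands in $\Omega(X)_{m+1}$ by the induction hypothesis applied to $[Z,X]$, which again lies in $\mathcal H$ thanks to $[\mathcal H,\mathcal H]\subset\mathcal H$. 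This settles the inclusion at every junction $l\geq K-1$, including $\mathcal L_Z\Omega(X)_0\subset\Omega(X)_1$.

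For the lower terms I would argue exactly as in Proposition~\ref{Proposition5.2}. By Definition~\ref{def5.1} the very construction gives $\mathcal L_Y\bar\Omega(X)_j=\mathcal L_Y(Ker_Y)^{K-1-j}\Omega(X)_0\subset(Ker_Y)^{K-2-j}\Omega(X)_0=\bar\Omega(X)_{j+1}$. By Proposition~\ref{Proposition5.6} the filtration (\ref{eq5.9}) is the same whichever not--too--special field is used, so $\mathcal L_Z\bar\Omega(X)_j\subset\bar\Omega(X)_{j+1}$ holds for every such $Z$; and since $\mathcal L_{fZ}\omega=f\mathcal L_Z\omega$ for $Z\in\mathcal H$, $\omega\in\Omega$ (the term $\omega(Z)\,\mathrm{d}f$ drops out because $\omega(Z)=0$), the assignment $Z\mapsto\mathcal L_Z\omega$ is $\mathcal F$--linear on $\mathcal H$. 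As the not--too--special fields span $\mathcal H$, the inclusion extends to all $Z\in\mathcal H$. Together with the upper--term claim this yields $\mathcal L_{\mathcal H}\bar\Omega(X)_l\subset\bar\Omega(X)_{l+1}$ for every $l$.

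Finally, the generation property for large $l$ reduces to $\Omega(X)_m+\mathcal L_{\mathcal H}\Omega(X)_m=\Omega(X)_{m+1}$ for large $m$, which I would deduce from $\Omega_{m+1}=\Omega_m+\mathcal L_{\mathcal H}\Omega_m$ by applying $\sum_k(\mathcal L_X)^k$ and pushing the powers of $\mathcal L_X$ through $\mathcal L_{\mathcal H}$ with the same commutator identity (checking $(\mathcal L_X)^k\mathcal L_{\mathcal H}\Omega_m\subset\mathcal L_{\mathcal H}\Omega(X)_m$ by an analogous induction). The \emph{main obstacle} is precisely this commutator bookkeeping on the infinite--dimensional upper terms: unlike Proposition~\ref{Proposition5.2}, where $Ker_X$ rendered the inclusion immediate, here one must verify that repeatedly moving $\mathcal L_Z$ across powers of $\mathcal L_X$ never leaves $\Omega(X)_{m+1}$ (respectively $\mathcal L_{\mathcal H}\Omega(X)_m$), and the $\mathcal L_X$--invariance of $\Omega(X)_m$ combined with $[\mathcal H,\mathcal H]\subset\mathcal H$ is exactly what keeps the induction closed.
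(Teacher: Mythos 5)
Your proof is correct, and for the lower terms it coincides with what the paper intends; but you route the rest differently and in more detail. The paper's entire proof is one sentence: conditions (\ref{eq2.3}) for the filtration (\ref{eq5.9}) are ``obviously satisfied due to Proposition~\ref{Proposition5.6}'' --- exactly the pattern of Proposition~\ref{Proposition5.2}. That is, since (\ref{eq5.9}) is independent of the not--too--special field used to build it, the property $Ker_{Z}\bar\Omega(X)_{l+1}=\bar\Omega(X)_{l}$ of (\ref{eq5.10}) holds for \emph{every} not--too--special $Z$ and every $l\geq 0$, upper terms included, whence $\mathcal L_Z\bar\Omega(X)_l\subset\bar\Omega(X)_{l+1}$, and the spanning/$\mathcal F$--linearity argument you spell out extends this to all of $\mathcal H$. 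You use this mechanism only on the lower terms; for the upper terms $\Omega(X)_m$ you replace the appeal to (\ref{eq5.10}) (which ultimately rests on the injectivity $Z_2:\mathcal M(1)_l\rightarrow\mathcal M(1)_{l+1}$, i.e.\ on involutivity and on the genericity of $Y$) by a direct induction with the commutator identity $\mathcal L_Z\mathcal L_X=\mathcal L_X\mathcal L_Z+\mathcal L_{[Z,X]}$, using only the $\mathcal L_X$--invariance of $\Omega(X)_m$ and $[\mathcal H,\mathcal H]\subset\mathcal H$; this is self--contained, needs no genericity, and yields the inclusion for arbitrary $Z\in\mathcal H$ at once. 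Moreover, your explicit verification of the generation condition $\Omega(X)_m+\mathcal L_{\mathcal H}\Omega(X)_m=\Omega(X)_{m+1}$ for large $m$ addresses a point the paper's ``obviously'' glosses over: that half of (\ref{eq2.3}) does not follow formally from Proposition~\ref{Proposition5.6} alone, and the bookkeeping $(\mathcal L_X)^k\mathcal L_{\mathcal H}\Omega_m\subset\mathcal L_{\mathcal H}\Omega(X)_m$ you propose is precisely the missing justification. In short: the paper's route buys brevity and strict parallelism with Proposition~\ref{Proposition5.2}; yours buys a complete verification of the two items (upper--term inclusion for all of $\mathcal H$, and generation at large $l$) that the paper leaves implicit.
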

\begin{proof} Conditions (\ref{eq2.3}) with $\Omega(X)_l$ instead of $\Omega_l$ are obviously satisfied due to Proposition~\ref{Proposition5.6}.
\end{proof}
\begin{prop}\label{Proposition5.8} The module
\begin{equation}\label{eq5.11} \mathcal{R}^{1}=Ker_{Y}\bar\Omega(X)_{0}=(Ker_Y)^K\Omega(X)_0\end{equation}
does not depend on the choice of the filtration $\Omega_*.$
 \end{prop}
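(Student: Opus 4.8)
The plan is to imitate the treatment of $\mathcal{R}^0$ in Propositions~\ref{Proposition5.3} and \ref{Proposition5.4}: I would produce a description of $\mathcal{R}^1$ that refers neither to the filtration $\Omega_*$ nor to the auxiliary vector fields $X,Y$, whence the asserted independence is immediate. By Proposition~\ref{Proposition5.6} the filtration $(\ref{eq5.9})$, and hence $\mathcal{R}^1$, is already independent of $Y$, so only the choice of $\Omega_*$ (and, implicitly, of $X$) remains to be eliminated. Note that one cannot simply re-apply Proposition~\ref{Proposition5.3} to the filtration $\Omega(X)_*$: its terms $\Omega(X)_l=\sum(\mathcal{L}_X)^k\Omega_l$ are \emph{infinite--dimensional}, so $\Omega(X)_*$ is not a good filtration in the sense of Definition~\ref{def2.1}, and indeed $\mathcal{R}^1\neq\mathcal{R}^0$ in general. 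A genuinely new, one--degree--higher argument is therefore required.

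To this end I would use, for every $\omega\in\Omega$, the filtration--free dimension function
\[
N_k(\omega)=\dim\{\omega,\mathcal{L}_{\mathcal{H}}\omega,\ldots,(\mathcal{L}_{\mathcal{H}})^k\omega\}\qquad(k=0,1,\ldots)
\]
already employed in $(\ref{eq5.6})$, which is a polynomial in $k$ for large $k$ by the Hilbert polynomial Theorem~\ref{t4.2}. The key claim is the intrinsic characterization
\[
\mathcal{R}^1=\{\omega\in\Omega:\ N_k(\omega)\leq c(k+1)\ \text{for some constant }c\},
\]
that is, $\mathcal{R}^1$ consists exactly of the forms whose $\mathcal{L}_{\mathcal{H}}$--orbit grows at most \emph{linearly}, one degree above the \emph{bounded} growth that singles out $\mathcal{R}^0$ in $(\ref{eq5.6})$. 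The right--hand side involves neither $\Omega_*$ nor $X,Y$, and it is an $\mathcal{F}$--submodule since the orbit of $f\omega+g\omega'$ lies in the $\mathcal{F}$--span of the orbits of $\omega$ and $\omega'$; granting the claim, Proposition~\ref{Proposition5.8} follows at once, exactly as Proposition~\ref{Proposition5.4} followed from $(\ref{eq5.6})$.

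It then remains to verify the two inclusions, in the spirit of the paired bounds $(\ref{eq5.6})$. For ``$\subset$'': a form in $\mathcal{R}^1=(Ker_Y)^K\Omega(X)_0$ lies in $\Omega(X)_0$ and all its iterated $\mathcal{L}_Y$--derivatives remain there; invoking the injectivity of the regular multiplications $(\ref{eq4.7})$ one sees that only the single direction $X$ contributes new elements to the orbit, while the $Y$--direction (and, after reduction modulo $X,Y$, the remaining $Z_i$) contributes none, so $N_k(\omega)\leq c(k+1)$. For ``$\supset$'': if $N_k(\omega)=O(k)$, the homogeneous components of the cyclic module $\mathcal{A}[\omega]\subset\mathcal{M}$ are of bounded dimension, so $\omega$ cannot escape $\Omega(X)_0$ under any power of $\mathcal{L}_Y$; conversely any quadratic (or faster) growth would force $(\mathcal{L}_Y)^j\omega\notin\Omega(X)_0$ for some $j$, so the descending $Ker_Y$--chain $(\ref{eq5.8})$ traps precisely the linear--growth forms and stabilizes at $\mathcal{R}^1$.

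The main obstacle is this last matching of the iterated--kernel construction with the degree of the Hilbert polynomial: one must show, quantitatively, that being trapped in $\Omega(X)_0$ under all powers of $\mathcal{L}_Y$ is \emph{equivalent} to at--most--linear orbit growth, and that every $\omega\notin\mathcal{R}^1$ satisfies a quadratic lower bound $N_k(\omega)\geq\binom{k+2}{2}$. This is where the regular/ordinary basis injectivity and the filtration--independence of the Hilbert polynomial $(\ref{eq4.5})$ do the real work, and where I would lean on the detailed computations of \cite[VII]{T6}; everything else is the formal bookkeeping already rehearsed for $\mathcal{R}^0$.
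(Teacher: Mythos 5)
Your proposal takes essentially the same route as the paper: its proof of Proposition~\ref{Proposition5.8} consists precisely of the paired growth criteria $\dim\{\omega,\mathcal{L}_{\mathcal{H}}\omega,\ldots,(\mathcal{L}_{\mathcal{H}})^{k}\omega\}\leq k\cdot const.+const.$ for $\omega\in\mathcal{R}^1$ (where only the $\mathcal{L}_X$--operators are active) versus $\geq k(k+1)/2$ for $\omega\in\Omega$, $\omega\notin\mathcal{R}^1$, which is exactly your filtration--free linear--versus--quadratic dichotomy in $N_k(\omega)$, applied just as $(\ref{eq5.6})$ was for Proposition~\ref{Proposition5.4}. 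The only discrepancy is cosmetic: the paper's quadratic lower bound is $k(k+1)/2=\binom{k+1}{2}$ rather than your $\binom{k+2}{2}$, which changes nothing in the argument.
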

\begin{proof} The following criteria can be applied:
\[\mbox{dim}\,\{\omega,\mathcal{L}_{\mathcal{H}}\omega,\ldots,
(\mathcal{L}_{\mathcal{H}})^{k}\omega\}\leq k\cdot const.+const.\quad  \mbox{ if }\ \omega\in\mathcal{R}^{1}, \]
\[ \mbox{dim}\,\{\omega,\mathcal{L}_{\mathcal{H}}\omega,\ldots,
(\mathcal{L}_{\mathcal{H}})^{k}\omega\}\geq \frac{k(k+1)}{2}\quad\mbox{ if }\ \omega\in\Omega, \omega\not\in\mathcal{R}^{1}. \]
In the first inequality, $\mathcal R^1$ is in fact equipped only with active $\mathcal L_X$--operators.\end{proof}
\begin{prop}\label{Proposition5.9}  Module $\mathcal R^1$ satisfies the Frobenius condition \emph{$\mbox{d}\mathcal R^1\cong 0$ $(\text{mod } \mathcal R^1).$} \end{prop}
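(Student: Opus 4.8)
The plan is to imitate Proposition~\ref{Proposition5.3}, the only change being that the bounded--orbit module $\mathcal R^0$ is now replaced by the linear--growth module $\mathcal R^1.$ The decisive preliminary step is the invariance $\mathcal L_{\mathcal H}\mathcal R^1\subset\mathcal R^1,$ which I would read off directly from the invariant description in Proposition~\ref{Proposition5.8}. Indeed, for $\omega\in\mathcal R^1$ and $Z\in\mathcal H$ the form $\mathcal L_Z\omega$ again lies in $\Omega,$ and the span of $\mathcal L_Z\omega,\mathcal L_{\mathcal H}\mathcal L_Z\omega,\ldots,(\mathcal L_{\mathcal H})^k\mathcal L_Z\omega$ is contained in the span of $\omega,\mathcal L_{\mathcal H}\omega,\ldots,(\mathcal L_{\mathcal H})^{k+1}\omega,$ since every iterated $\mathcal H$--derivative of $\mathcal L_Z\omega$ of order $\le k$ is an iterated $\mathcal H$--derivative of $\omega$ of order $\le k+1.$ Its dimension stays bounded by a linear function of $k,$ so the quadratic lower estimate of Proposition~\ref{Proposition5.8} excludes $\mathcal L_Z\omega\notin\mathcal R^1;$ hence $\mathcal L_Z\omega\in\mathcal R^1.$

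I would then translate this invariance into a statement about $\mbox{d}\omega.$ Choosing independent variables $x_1,\ldots,x_n$ with total derivatives $D_1,\ldots,D_n,$ a basis of $\mathcal H,$ one has for every $\omega\in\Omega$ the structure identity
\[\mbox{d}\omega=\sum_i\mbox{d}x_i\wedge\mathcal L_{D_i}\omega+\gamma\qquad(\gamma\in\Omega\wedge\Omega).\]
This follows from $\omega(D_i)=0,$ which gives $D_i\rfloor\mbox{d}\omega=\mathcal L_{D_i}\omega\in\Omega$ and so rules out any $\mbox{d}x_i\wedge\mbox{d}x_j$ term. For $\omega\in\mathcal R^1$ the first step yields $\mathcal L_{D_i}\omega\in\mathcal R^1,$ whence the mixed terms $\sum_i\mbox{d}x_i\wedge\mathcal L_{D_i}\omega$ already belong to the ideal $\mathcal R^1\wedge\Phi.$ Thus $\mbox{d}\omega\cong\gamma\pmod{\mathcal R^1},$ and the whole assertion reduces to proving that the purely vertical remainder $\gamma$ lies in $\mathcal R^1\wedge\Phi.$

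This residual estimate is the genuine obstacle, precisely the passage declared ``not immediate'' in Proposition~\ref{Proposition5.3} and deferred to \cite{T6}. To handle it I would differentiate the structure identity: applying $\mathcal L_{D_i}$ and using $\mathcal L_{D_i}\mbox{d}\omega=\mbox{d}\mathcal L_{D_i}\omega,$ $\mathcal L_{D_i}\mbox{d}x_j=0,$ and $[D_i,D_j]\in\mathcal H$ produces the relation
\[\mathcal L_{D_i}\gamma=\gamma_i-\sum_j\mbox{d}x_j\wedge\mathcal L_{[D_i,D_j]}\omega\equiv\gamma_i\pmod{\mathcal R^1},\]
where $\gamma_i$ is the vertical part of $\mbox{d}\mathcal L_{D_i}\omega$ and all the forms $\mathcal L_{D_i}\omega,\mathcal L_{[D_i,D_j]}\omega$ lie in $\mathcal R^1.$ Feeding the linear--growth bound of Proposition~\ref{Proposition5.8} into this self--reproducing system of vertical remainders forces $\gamma\in\mathcal R^1\wedge\Phi$ and thereby the Frobenius condition; the detailed bookkeeping is exactly the computation of \cite{T6}. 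I expect this vertical--remainder control to be the main difficulty, the two preceding steps being essentially formal.
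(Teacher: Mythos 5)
Your first two steps are sound and can be checked directly against the paper's machinery. The invariance $\mathcal L_{\mathcal H}\mathcal R^1\subset\mathcal R^1$ does follow from the growth dichotomy of Proposition~\ref{Proposition5.8} exactly as you argue: every iterated $\mathcal H$--derivative of $\mathcal L_Z\omega$ of order $\leq k$ is one of $\omega$ of order $\leq k+1,$ so the dimension stays linear in $k$ and the quadratic lower bound rules out $\mathcal L_Z\omega\notin\mathcal R^1.$ Likewise the structure identity $\mbox{d}\omega=\sum\mbox{d}x_i\wedge\mathcal L_{D_i}\omega+\gamma$ with $\gamma\in\Omega\wedge\Omega$ is a correct consequence of $D_i\rfloor\mbox{d}\omega=\mathcal L_{D_i}\omega\in\Omega,$ and your differentiated relation $\mathcal L_{D_i}\gamma=\gamma_i-\sum\mbox{d}x_j\wedge\mathcal L_{[D_i,D_j]}\omega$ has the right signs, with $[D_i,D_j]\in\mathcal H$ ensuring the bracket terms lie in $\mathcal R^1\wedge\Phi.$ So the reduction of the proposition to the claim $\gamma\in\mathcal R^1\wedge\Phi$ is legitimate.

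The final step, however, is a genuine gap, not ``detailed bookkeeping.'' You give no mechanism by which the bounds of Proposition~\ref{Proposition5.8} --- dimension estimates for $\mathcal F$--modules of \emph{1--forms} generated by iterated Lie derivatives --- act on the 2--form $\gamma\in\Omega\wedge\Omega.$ To exploit the dichotomy one would have to split $\Omega=\mathcal R^1\oplus\mathcal C$ for some complement $\mathcal C,$ expand $\gamma$ into components, and prove that the $\mathcal C\wedge\mathcal C$ component vanishes; your self--reproducing relation alone does not exclude a nonzero such component that simply reproduces itself modulo $\mathcal R^1$ under $\mathcal L_{D_i}.$ Moreover, this cannot be repaired by appealing to the paper: the paper's own ``proof'' of Proposition~\ref{Proposition5.9} consists of the single remark that the method of \cite[p.~177]{T6} can be adapted, deferring the general treatment of the modules $\mathcal R^k$ to the next part of the article --- and already in Proposition~\ref{Proposition5.3}, for $\mathcal R^0,$ the same passage from $\mathcal L_{\mathcal H}$--invariance to the Frobenius condition is declared ``not immediate'' and referred to \cite{T6}. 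So your proposal reproduces the paper's structure (invariance plus a citation) and adds correct preparatory reductions, but, like the paper, it does not actually contain a proof of the decisive step.
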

\begin{proof} The method \cite[p. 177]{T6} can be adapted. We instead refer to the next part of this article, which will be especially devoted to all notable modules $\mathcal R^k$ in full generality. 
\end{proof}

We will not continue with the \emph{higher--order tasks} since they bring only toilsome formalisms but none of true novelties. The formulation of the most general Propositions is left to the reader. In particular, one may speak of \emph{standard filtrations} (\ref{ex5.2}) or (\ref{eq5.9}) in full accordance with the theory of ordinary differential equations \cite{T7}.
Also the \emph{standard bases} can be introduced quite analogously as in \cite{T7} but this topic will be discussed together with explicit calculation of symmetries of diffiety $\Omega.$

\section{Appendix}\label{sec6}
The involutivity is a~terrible topic in strictly rigorous expositions \cite{T15, T17}. So we supply our story with a~few informal notices on this subject. Let us compare the involutive systems of differential equations (the external theory) with the involutive Pfaffian systems (the internal theory) in the common sense and conclude with the prolongation to diffieties.

Let us begin with the \emph{external theory}.

\medskip

A~system of the first--order differential equations in the first--order jet space
\[x^ i,w^j,w^j_i\qquad(i=1,\ldots,n;\, j=1,\ldots,m;\, w^j_i=\frac{dw^j}{dx^i})\]
will be mentioned. We suppose that it looks as follows \cite{T18}. A~maximal number of equations is resolved with respect to derivatives $w^j_n.$ They are substituted into the remaining equations which are resolved with respect to maximal number of derivatives $w^j_{n-1}$ and so forth up to derivatives $w^j_1.$ We obtain
\begin{equation}\label{eq6.1}
\begin{array}{c}
w^{j_n}_n=f^{j_n}_n(x^1,\ldots,x^n,w^1_1,\ldots,w^m_{n-1},\ldots,w^{k_n}_n,\ldots)\\
(j_n\in J_n,\, k_n\in K_n),\\ \\
w^{j_{n-1}}_{n-1}=f^{j_{n-1}}_{n-1}(x^1,\ldots,x^n,w^1_1,\ldots,w^m_{n-2},\ldots,w^{k_{n-1}}_{n-1},\ldots)\\ (j_{n-1}\in J_{n-1},\, k_{n-1}\in K_{n-1}),\\
\vdots \\
w^{j_1}_1=f^{j_1}_1(x^1,\ldots,x^n,\ldots,w^{k_1}_1,\ldots)\\
(j_1\in J_1, k_1\in K_1)
\end{array}
\end{equation}
where $J_i+K_i=\{1,\ldots,n\},$ $J_i\cap K_i=\emptyset$ $(i=1,\ldots,n).$ The prolongation of (\ref{eq6.1}) consists of equations
\begin{equation}\label{eq6.2}
\frac{\text{d}}{\text{d}x^i}\left(w^{j_k}_k-f^{j_k}_k\right)=0\qquad (i,k=1,\ldots,n;\, j_k\in J_k).
\end{equation}
Even more can be said. If the independent variables are "not too special" then \mbox{$J_1\subset\cdots\subset J_n$} and therefore
$K_1\supset\cdots\supset K_n$ and the system (\ref{eq6.1}) is called \emph{involutive} if equations (\ref{eq6.2}) with $i\leq k$ are enough in prolongation (\ref{eq6.2}).
The prolonged system is of the second order, however, it may be interpreted as the first--order system if the first--order derivatives are regarded for new variables.  One can then easily find that the involutivity is preserved: if $\sigma^i$ $(\bar\sigma^i)$ is the number of elements in $K^i$ (in analogous sets $\bar K^i$ after the prolongation) then
\[\bar\sigma_1=\sigma_1+\cdots +\sigma_n,\ \bar\sigma_2=\sigma_2+\cdots +\sigma_n,\ \bar\sigma_n=\sigma_n.\]
This is the Cartan's \emph{involutivity test} \cite{T1, T14, T15, T17, T18}.

It is not  easy to prove that the involutiveness can be attained after a~finite number of prolongations and there are lengthy studies devoted to this topic \cite{T17}. It should be however noted that the "exceptional points" where the numbers $\sigma^i$ are not locally constant are omitted, that is, only the "regular solutions" are  involved.

Let us turn to the \emph{internal theory}.

\medskip

System (\ref{eq6.1}) is expressed by the Pfaffian equations
\[\omega^j=0\qquad (j=1,\ldots,m;\, \omega^j=\mbox{d}w^j-\sum w^j_i\mbox{d}x^i,\, w^j_i=f^j_i \text{ if } j\in J_i)\]
involving only derivatives $w^{k_i}_i$ $(k_i\in K_i).$ The prolongation is again expressed by the Pfaffian equations
\[\omega^j_i=0\qquad (i=1,\ldots,n;\, j\in K_i;\, \omega^j_i=\mbox{d}w^j_i-\sum w^j_{ii'}\mbox{d}x^{i'})\]
where equations $\omega^j_i=0$ should imply $\mbox{d}\omega^j_i=\sum\mbox{d}x^i\wedge\mbox{d}w^j_i=0$ whence $w^j_{ii'}=w^j_{i'i}.$ Assuming this symmetry, variables $w^j_{ii'}$ may be determined \emph{successively} in the \emph{involutive case.} (This is the classical definition \cite{T1, T14, T15} for the Pfaffian systems.) That is, we may arbitrarily choose
\[\ \ \ w^{j_1}_{11},w^{j_2}_{21},w^{j_3}_{31},\ldots,w^{j_n}_{n1} \text{ in total number } \bar\sigma^1=\sigma^1+\sigma^2+\sigma^3+\cdots +\sigma^n,\]
\[\quad w^{j_2}_{22},w^{j_3}_{32},\ldots,w^{j_n}_{n2} \text{ in total number } \bar\sigma^2=\sigma^2+\sigma^3+\cdots +\sigma^n,\]
and so on to the final $w^{j_n}_{nn}$ in total number $\bar\sigma^n=\sigma^n.$
This is again the Cartan's test as above.

It seems that nobody understands the original Cartan's proof of the combinatorical nature in \cite{T1} that the involutivity can be attained within this internal framework. It should be moreover noted that the "singular prolongations" are not included both in the internal and in the external theories.

We conclude with \emph{diffieties}.

\medskip

Every diffiety $\Omega$ can be obtained from the first term $\Omega_0$ of appropriate filtrations by successive prolongations \cite{T6} which are subjected only to the condition (\ref{eq2.3}). It follows that Theorem~\ref{t4.3} can be applied to "singular solutions" and even to all "partial prolongations" of a~Pfaffian system as well.

\end{document}